\documentclass{article}
\usepackage{graphicx} % Required for inserting images
\usepackage[utf8]{inputenc}
\usepackage{amsmath, amsfonts, amssymb, amsthm}
\usepackage[english]{babel}
\usepackage[utf8]{inputenc}
\usepackage{ragged2e}
\usepackage{graphicx}
\usepackage{amscd}
\usepackage{txfonts}
\usepackage{latexsym}
\usepackage{fancyvrb}

\usepackage{caption}
\usepackage{subcaption}
\usepackage{makeidx}
\usepackage{authblk}
\usepackage{tcolorbox}
\usepackage{multicol}
\usepackage[titletoc]{appendix}
\usepackage{listings}
\usepackage[labelformat=simple]{subcaption}
\usepackage{xcolor}

\usepackage[color=teal!30!white, bordercolor=teal, textsize=footnotesize,obeyFinal]{todonotes}
%\usepackage{subfig}

%advances in Geometry

\newtheorem{lemma}{Lemma}[section]
\newtheorem{proposition}{Proposition}[section]
\newtheorem{remark}{Remark}[section]
\newtheorem{theorem}{Theorem}[section]

\title{The Hoffman-Singleton manifold}

\author[1]{Daniel Pellicer\thanks{pellicer@matmor.unam.mx}}
\author[1]{Yesenia Villicaña Molina\thanks{yesenia.villicana.molina@gmail.com}}
\affil[1]{Centro de Ciencias Matemáticas, Universidad Nacional Autónoma de México, Morelia, Michoacán,
Mexico}

\date{}

\begin{document}

\maketitle

\abstract{We introduce the Hoffman-Singleton manifold based on some specific subgraph of the Hoffman-Singleton graph. This manifold is motivated in a combinatorial fashion, and it is defined rigorously in geometric terms. We also present a few geometric properties of this manifold.}

\section{Introduction}

Most of our research as mathematicians follows directions that have already been defined, where only one, two or three areas of mathematics are known to be relevant. Many of us find pleasant when we are made aware of unexpected connections between seemingly far away topics of mathematics.

It seems like the most natural way to construct hyperbolic manifolds is as quotients of the hyperbolic space $\mathbb{H}^3$ by a torsion free, Kleinian group. %\Yes{Le quité lo de discreto, ya que los grupos Kleinianos ya son discretos por definición.}

In this paper we construct a hyperbolic 3-manifold from a substructure of the famous Hoffman-Singleton graph. This is the only graph with diameter $2$, girth $5$ and valency $7$. Even if in the formal construction of the manifold we pass through a subgroup of a hyperbolic reflection group, the key of the construction is the combinatorial structure of a certain subgraph of the Hoffman-Singleton graph.

The Hoffman-Singleton graph has been object of an extensive study; it is to be expected that some of its interesting combinatorial properties translate into interesting geometric properties of the manifold. For example, many automorphisms of this graph reflect as isometries of the resulting manifold.

The paper is organized as follows. We start by describing the Hoffman-Singleton graph and some of its features in Section \ref{s_graph}. Then, in Section \ref{sec:description} we formally construct a hyperbolic $3$-manifold that was motivated by a substructure of the Hoffman-Singleton graph. In Section \ref{s:properties} we also provide some relevant geometric properties of this manifold.

\section{Construction from the Hoffman-Singleton graph}\label{s_graph}

Before proceeding to the construction of a manifold we recall some basic properties of the Hoffman-Singleton graph (HSG) (see for example \cite{HSG1}, \cite{Godsil}, \cite{HSG2}, \cite{HSG0}).

    \begin{figure}[h]
\begin{center}
    \includegraphics[width=3cm]{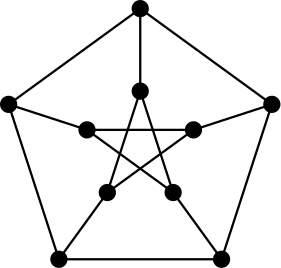}
\end{center}
  \caption{Petersen Graph.}
  \label{fig:Petersen}
\end{figure}

The Petersen graph (see Figure \ref{fig:Petersen}) and the HSG are the only graphs known to exist with diameter $2$, girth $5$ (the {\em girth} is the length of a shortest cycle) and minimum degree at least $3$. In fact, any other such graph must have all its vertices of degree $57$ and it is a long standing problem to determine whether there exist any more.

The HSG is a connected graph with $50$ vertices and $175$ edges. Every vertex has valency $7$ and any pair of vertices that are not adjacent have a neighbor in common. In fact, these properties constitute an alternative equivalent set of properties that completely determine the HSG. It has $252,000$ automorphisms under which all vertices (or edges or arcs) are equivalent.  To highlight how symmetric the HSG is, it suffices to compare with the $10$ vertices, $15$ edges and only $120$ automorphisms of the Petersen graph. 

The properties of the HSG graph imply that it has no triangles and no squares, and that every path with $3$ edges can be completed to a unique pentagon. The following lemma is not so obvious, but it is still true. It can be easily verified either directly (by using the high degree of symmetry of the HSG), or with the help of a computer software.

\begin{lemma}\label{l:PetProp}
Given any pentagon $P$ of the HSG and another edge $e$ incident to a vertex of $P$ then there is a unique Petersen graph contained in the HSG that contains $P$ and $e$.
\end{lemma}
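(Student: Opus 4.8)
The plan is to reconstruct the Petersen subgraph explicitly from $P$ and $e$, using repeatedly the defining property that two non-adjacent vertices of the HSG have exactly one common neighbour, and then to derive uniqueness from the rigidity of the Petersen graph. Write $P=v_0v_1v_2v_3v_4$ and $e=v_0w$ with $w\notin P$. Since the HSG has girth $5$, a vertex outside $P$ can be adjacent to at most one of the $v_i$ (adjacency to two consecutive ones would give a triangle, to two at distance $2$ in $P$ a square); hence each $v_i$ has exactly five neighbours $N_i$ off $P$, the sets $N_0,\dots,N_4$ are pairwise disjoint, $w\in N_0$, and $w$ is adjacent to none of $v_1,v_2,v_3,v_4$.

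Next I would build the remaining four vertices step by step: put $u_0:=w$; let $u_2$ be the common neighbour of $w$ and $v_2$, and $u_3$ that of $w$ and $v_3$; let $u_1$ be the common neighbour of $u_3$ and $v_1$; and let $u_4$ be the common neighbour of $u_1$ and $v_4$. At each step one checks, again by girth $5$, that the two vertices involved are indeed non-adjacent (so the common neighbour exists and is unique) and that it lands in the expected $N_i$; in particular $u_0,\dots,u_4$ are automatically distinct. One then asks whether $v_0,\dots,v_4,u_0,\dots,u_4$, with outer pentagon $v_0v_1v_2v_3v_4$, spokes $v_iu_i$, and pentagram $u_0u_2u_4u_1u_3$, form a Petersen subgraph. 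By construction all five pentagon edges, all five spokes ($v_0u_0=e$ among them), and the pentagram edges $u_0u_2$, $u_0u_3$, $u_1u_3$, $u_1u_4$ are present; the \emph{only} one of the fifteen Petersen edges not automatically produced is $u_2u_4$. Moreover no further edge among the ten vertices can occur, since the Petersen graph has diameter $2$ and an extra edge would create a triangle in the girth-$5$ HSG.

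For uniqueness, let $\Pi$ be any Petersen subgraph of the HSG containing $P$ and $e$. As the Petersen graph has diameter $2$, adding any of its non-edges would create a triangle in the HSG, so $\Pi$ is an induced subgraph and ``common neighbour in $\Pi$'' agrees with ``common neighbour in the HSG''. Every $5$-cycle of the Petersen graph is an outer pentagon in a suitable labelling, so $P$ is an outer pentagon of $\Pi$; the unique edge of $\Pi$ at $v_0$ off $P$ is then a spoke, hence equals $e$, so $w$ is the pentagram vertex at $v_0$. The Petersen adjacencies now force the remaining four vertices of $\Pi$, one after another, to be exactly the common neighbours used above (the two pentagram neighbours of $w$ are its common neighbours with $v_2$ and with $v_3$; then $u_1$ is the common neighbour of $u_3$ and $v_1$; then $u_4$ that of $u_1$ and $v_4$), and these are unique in the HSG. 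Hence $\Pi$ is precisely the graph constructed above; in particular a Petersen subgraph through $(P,e)$ exists if and only if $u_2\sim u_4$, and it is then unique.

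So the whole lemma reduces to the single adjacency $u_2\sim u_4$ --- equivalently, to the assertion that the $4$-edge path $u_2u_0u_3u_1u_4$ closes up into a pentagon --- and this is the one step that really uses the global structure of the HSG rather than just its girth and its common-neighbour property, so I expect it to be the main obstacle. I would settle it using the high symmetry of the HSG: $\mathrm{Aut}(\mathrm{HSG})$ acts transitively on the pairs consisting of a pentagon and an incident edge not lying on it --- which one reads off from its transitivity on arcs and $3$-arcs together with the action of an arc stabiliser --- so it suffices to verify $u_2\sim u_4$ for a single conveniently chosen pair $(P,e)$ in an explicit model of the HSG, e.g.\ inside one of the Petersen subgraphs visible in the standard pentagon/pentagram presentation, after which the general case follows. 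Absent a slicker argument for this last adjacency, it is in any event a short finite check (by computer, or by hand in a good model), and the conceptual content of the lemma is already exhausted by the construction and rigidity arguments above. One could also observe that, together with uniqueness, the lemma is equivalent to the HSG having exactly $525$ Petersen subgraphs (its $1260$ pentagons carry $1260\cdot 25$ incident external edges, and each Petersen subgraph accounts for $12\cdot 5=60$ such pairs), but establishing that count is no easier than the adjacency itself.
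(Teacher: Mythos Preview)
The paper does not give a proof of this lemma at all: it simply states that the lemma ``can be easily verified either directly (by using the high degree of symmetry of the HSG), or with the help of a computer software'', and then remarks that uniqueness follows from diameter~$2$ and girth~$5$ while existence does not seem to follow from those properties alone.

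Your proposal is therefore not competing with a proof in the paper but rather fleshing out exactly the sketch the authors gesture at. You carry out the uniqueness argument in full (the paper only asserts it), and you isolate precisely the one non-formal step---the adjacency $u_2\sim u_4$---that constitutes the existence part, which is just what the paper means when it says existence needs the symmetry of the HSG or a computer check. Your reduction to a single pair $(P,e)$ via transitivity of $\mathrm{Aut}(\mathrm{HSG})$ on such pairs, or alternatively to the known count of $525$ Petersen subgraphs, is in the same spirit as the paper's appeal to ``the high degree of symmetry''. The one place where you are a bit quick is the assertion that this transitivity ``reads off'' from $3$-arc-transitivity plus the arc stabiliser action: that step deserves a line of justification (the $3$-arc stabiliser has order~$20$ and must act transitively on the five candidate edges~$e$ at the endpoint), but you already flag it as the crux and offer the fallback of an explicit check, which is all the paper itself does.
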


The uniqueness in Lemma \ref{l:PetProp} (in case such a Petersen graph exists) follows easily from the facts that the HSG has diameter $2$ and girth $5$. However, the existence of such a Petersen graph does not seem to be derived directly from these properties.% \Daniel{No que yo sepa, pero cuando uno se pone a ver a mano cu\'ales v\'ertices requieren un vecino en com\'un (porque en la subgr\'afica construida hasta ese momento est\'an a distancia 3 \'o m\'as) y verifica que ese vecino com\'un no sea de los v\'ertices que ya se tienen en la subgr\'afica uno llega r\'apido a la gr\'afica de Petersen menos una arista. Hasta ah\'{\i} no es complicado. Pero no parece haber raz\'on que obligue a la arista faltante a ser parte de la gr\'afica.}
%Yes{Justo eso vi!}

Given an edge $e=uv$ of the HSG we may label the other neighbors of $u$ and of $v$ by $\{u_1,\dots,u_6\}$ and $\{v_1,\dots,v_6\}$, respectively. If
$$W = V(HSG) \setminus \{u,v,u_i,v_i : i \in \{1,\dots,6\}\}$$
then every $w \in W$ is adjacent exactly to one vertex $u_i$ and to one vertex $v_j$. Conversely, the diameter and girth of the HSG imply that there is exactly one vertex in $W$ that is adjacent to a given vertex $u_i$ and to a given vertex $v_j$. This justifies to label the elements of $W$ by the pairs in $(\mathbb{Z}_6)^2$. The subgraph of the HSG induced by $W$ is elsewhere known as the {\em Sylvester graph} (see for example \cite[Theorem 7.5.3]{Brouwer}).

The edges of the HSG between pairs of vertices in $W$ satisfy the following properties, all of them consequences of the degrees of the vertices together with the diameter and girth of the HSG.
\begin{itemize}
    \item There is no edge between two vertices having the same first entry, or having the same second entry.
    \item Each vertex $(i,j)$ is adjacent to exactly one vertex $(i',j')$ for each $i' \ne i$, and to one vertex $(i'',j'')$ for each $j'' \ne j$.
    \item The subgraph of the HSG induced by $W$ contains no triangles and no squares. 
\end{itemize}

Furthermore, Lemma \ref{l:PetProp} implies the following two results.

\begin{lemma}\label{l:Crosses}
    With the notation above, if there is an edge of the HSG between the vertices $(i,j)$ and $(k,m)$ of $W$ then also $(i,m) (k,j)$ is an edge of the HSG.
\end{lemma}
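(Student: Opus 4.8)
The plan is to build a suitable pentagon together with an auxiliary edge inside the HSG, feed these to Lemma~\ref{l:PetProp}, and then read off the edge $(i,m)(k,j)$ from the rigid structure of the resulting Petersen graph.

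First I would assemble the pentagon. Since $(i,j)\sim(k,m)$, the properties of edges inside $W$ listed above give $i\ne k$ and $j\ne m$, so $(i,j),(k,m),(i,m),(k,j)$ are four distinct vertices of $W$. Using that $(i,j)\sim u_i$, that $(k,m)\sim u_k$, and that $u_i\sim u\sim u_k$, the five vertices
$$u,\qquad u_i,\qquad (i,j),\qquad (k,m),\qquad u_k$$
are pairwise distinct (the three of $u$-type because $u_i,u_k\ne u$ and $i\ne k$, the remaining two because they lie in $W$) and, read in this cyclic order, form a pentagon $P$. The edge $e=uv$ is incident to the vertex $u$ of $P$ and is not an edge of $P$, so Lemma~\ref{l:PetProp} supplies a Petersen subgraph $Q\subseteq HSG$ containing $P$ and $e$.

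Then I would exploit the rigidity of the Petersen graph. If $(a_0,\dots,a_4)$ is any $5$-cycle of a Petersen graph, each $a_\ell$ has a unique third neighbour $c_\ell$ in that Petersen graph; the $c_\ell$ are pairwise distinct and satisfy $c_\ell\sim c_{\ell\pm 2}$ (indices mod $5$), because girth $5$ together with $3$-regularity rules out triangles and $4$-cycles among these vertices. Applying this with $a_0,\dots,a_4=u,u_i,(i,j),(k,m),u_k$ inside $Q$, I would now pin down the $c_\ell$ one at a time using the neighbourhoods described before the lemma. We have $c_0=v$, since $v\in Q$ is a neighbour of $u=a_0$ not lying on $P$. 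Next $c_2$ is a common neighbour of $(i,j)=a_2$ and of $c_0=v$; as the HSG-neighbours of $v$ are $u,v_1,\dots,v_6$ and $u$ is not adjacent to $(i,j)$, this forces $c_2=v_j$, and symmetrically $c_3=v_m$. Now $c_1$ is a neighbour of $u_i=a_1$ other than $u$, hence $c_1=(i,s)$ for some $s$; but $c_1\sim c_3=v_m$, and the only neighbour of $(i,s)$ among $v_1,\dots,v_6$ is $v_s$, so $s=m$ and $c_1=(i,m)$. The same argument with $c_4\sim c_2=v_j$ gives $c_4=(k,j)$. Finally the pentagram edge $c_1\sim c_4$ is precisely the edge $(i,m)\sim(k,j)$, which completes the proof.

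The one point that is not pure bookkeeping is the rigidity statement for the Petersen graph, and even that is a two-line consequence of girth $5$ and $3$-regularity; everything else is forced by the degree constraints at $u$, $v$, $u_i$ and $u_k$ recalled above. One could equally well run the argument with the pentagon $v,v_j,(i,j),(k,m),v_m$ and the edge $vu$; I would pick whichever typesets more cleanly.
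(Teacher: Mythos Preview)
Your argument is correct and follows the same strategy as the paper: invoke Lemma~\ref{l:PetProp} on a pentagon-plus-edge, then let the rigidity of the Petersen graph force all ten vertices and in particular the edge $(i,m)(k,j)$. The only difference is cosmetic: the paper starts from the pentagon $u,\,v,\,v_j,\,(i,j),\,u_i$ together with the extra edge $(i,j)(k,m)$ and argues via ``diameter $2$'' to locate common neighbours, whereas you start from the pentagon $u,\,u_i,\,(i,j),\,(k,m),\,u_k$ with the extra edge $uv$ and phrase the rigidity as the pentagram rule $c_\ell\sim c_{\ell\pm 2}$; both unwind to the same vertex chase.
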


\begin{proof}
According to Lemma \ref{l:PetProp} there is a unique Petersen graph $H$ containing the pentagon with vertices $u$, $v$, $u_i$, $v_j$ and $(i,j)$ of the HSG as well as the edge $(i,j) (k,m)$. Recall that the Petersen graph has diameter $2$. Then $H$ must contain the unique common neighbors $u_k$ and $v_m$ of $(k,m)$ and $u$, and of $(k,m)$ and $v$, respectively. It also must contain the common neighbor $(k,j)$ of $u_k$ and $v_j$ and the common neighbor $(i,m)$ of $u_i$ and $v_m$. The only missing edge to complete the Petersen graph is $(i,m) (k,j)$ and therefore it must be an edge of the subgraph of HSG.
\end{proof}

The previous lemma can be reinterpreted as follows. Consider the subgraph $G_W$ of the HSG induced by $W$ and order the vertices as a $6 \times 6$ grid. When taking two rows $i$ and $k$ as well as two columns $j$ and $m$, either $G_W$ contains as edges the two diagonals of the rectangle formed by the four intersections of those rows and columns, or $G_W$ contains none of the diagonals as edges (see Figure \ref{fig:crosses}).

    \begin{figure}[h]
\begin{center}
    \includegraphics[width=5cm]{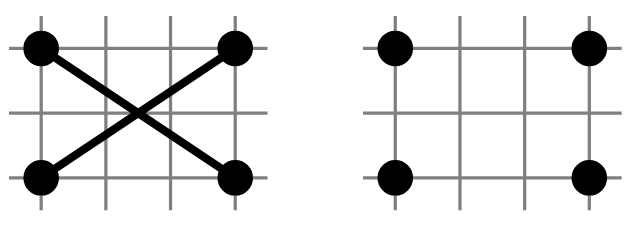}
\end{center}
  \caption{Either the two diagonals of each rectangle or none are part of $G_W$.} 
  \label{fig:crosses}
\end{figure}

\begin{lemma}\label{l:hexagons}
    With the notation above, the subgraph $H_3$ induced by the vertices of three rows of $W$ is the union of three disjoint hexagons. Furthermore, each hexagon has a vertex in each column.
\end{lemma}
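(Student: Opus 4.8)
The plan is to reduce the statement to a fact about three permutations and then to eliminate one stubborn case using Lemma~\ref{l:PetProp}. First I would record the local picture. Each vertex of $H_3$ has exactly one neighbour (in the HSG) inside each of the other two chosen rows and none inside its own row, so $H_3$ is $2$-regular, i.e.\ a disjoint union of cycles. Since the matching between any two rows is a perfect matching, a cycle that has just passed from row $i_a$ to row $i_b$ has, at its current row-$i_b$ vertex, its unique row-$i_a$ neighbour equal to the vertex it came from, so it must continue into the third row; hence the sequence of rows along every component is periodic $i_1,i_2,i_3,i_1,i_2,i_3,\dots$ (up to orientation), every component has length a multiple of $3$, and at least $6$ because the HSG has no triangles. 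Encoding the row-$i_a$/row-$i_b$ matching as the permutation $\pi_{ab}$ of $\mathbb Z_6$ with $(i_a,x)\sim(i_b,y)\iff y=\pi_{ab}(x)$, Lemma~\ref{l:Crosses} (applied to the two rows $i_a,i_b$ and the columns $x,\pi_{ab}(x)$) makes each $\pi_{ab}$ a fixed-point-free involution, hence a product of three transpositions and in particular an odd permutation. Following a component once around the three rows, the component through $(i_1,a)$ has length $3k$ where $k$ is the order of $a$ under $\rho:=\pi_{13}\,\pi_{23}\,\pi_{12}$; so the lemma is equivalent to: \emph{$\rho$ is a fixed-point-free involution}.

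Next I would dispose of the easy cases. If $\rho(a)=a$ then $(i_1,a),(i_2,\pi_{12}(a)),(i_3,\pi_{23}\pi_{12}(a))$ would be a triangle of the HSG, so $\rho$ is fixed-point-free. Being a product of three odd permutations, $\rho$ is odd; among the fixed-point-free permutations of $\mathbb Z_6$ the cycle types $(2,4)$ and $(3,3)$ are even, so they are excluded and $\rho$ has type $(2,2,2)$ (three hexagons, the desired conclusion) or $(6)$ (a single $18$-cycle). The whole difficulty — the main obstacle — is to exclude the $18$-cycle; note this step genuinely needs Lemma~\ref{l:PetProp}, since the $18$-cycle is compatible with everything one can deduce from the properties of $G_W$ restricted to three rows.

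To exclude it, suppose $H_3$ is an $18$-cycle $C$ and take consecutive vertices $(i_1,a_0),(i_2,a_1),(i_3,a_2),(i_1,a_3),(i_2,a_4),(i_3,a_5),\dots$ of $C$, with predecessor $(i_3,a_{-1})$ of $(i_1,a_0)$; here $a_0\ne a_3$, $a_{-1}=\pi_{13}(a_0)$, $a_5=\pi_{23}(a_4)$, and $(i_3,a_{-1})$ and $(i_3,a_5)$ are distinct vertices of $C$, so $a_{-1}\ne a_5$. Since $u_{i_1}$ is adjacent to every vertex of row $i_1$, the vertices $u_{i_1},(i_1,a_0),(i_2,a_1),(i_3,a_2),(i_1,a_3)$ form a pentagon $\Pi$, and $(i_1,a_3)(i_2,a_4)$ is an edge at its vertex $(i_1,a_3)$; let $H$ be the Petersen graph they generate by Lemma~\ref{l:PetProp}. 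Now I would extract $H$ vertex by vertex, as in the proof of Lemma~\ref{l:Crosses}. The non-adjacent pair $(i_2,a_1),(i_2,a_4)$ has $u_{i_2}$ as its only common neighbour in the HSG (no vertex of $W$ is adjacent to two vertices of a fixed row, and $v$ is adjacent to neither), so $u_{i_2}\in H$. Using that $H$ is the union of $\Pi$, its complementary pentagon $\Pi'$, and the perfect spoke matching between them, the spokes at $(i_2,a_1)$ and at $(i_1,a_3)$ are $u_{i_2}$ and $(i_2,a_4)$; a short check inside the $5$-cycle $\Pi'$ then shows the spoke partner of $u_{i_1}$ cannot lie in row $i_1$ (otherwise $u_{i_2}$ would be forced adjacent to the spoke partner of $(i_1,a_0)$ or of $(i_3,a_2)$, which can only be the vertex $(i_2,a_1)$ already in $\Pi$), so it is $u$; and then the second $\Pi'$-neighbour of $u$ must be $u_{i_3}$ (the only neighbour of $(i_3,a_2)$ among $\{v,u_1,\dots,u_6\}$). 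Hence $\Pi'$ is the cycle $u_{i_3}-u-u_{i_2}-(i_2,a_4)-N-u_{i_3}$, where $N$ is the spoke partner of $(i_1,a_0)$; from $N\sim u_{i_3}$ and $N\sim(i_1,a_0)$ we get $N=(i_3,\pi_{13}(a_0))=(i_3,a_{-1})$, and from $N\sim(i_2,a_4)$ we get $a_{-1}=\pi_{23}(a_4)=a_5$ — contradicting $a_{-1}\ne a_5$. The hard part is precisely the bookkeeping that pins down the ten vertices and fifteen edges of $H$ and runs this case analysis; the rest is routine. This rules out the $18$-cycle, so $\rho$ is a fixed-point-free involution and $H_3$ is a disjoint union of three hexagons, each meeting every row twice.

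Finally, that each hexagon has a vertex in each column is easy a posteriori. If a hexagon contained two vertices $(i_a,c)\ne(i_b,c)$ of a common column $c$, they would be non-adjacent (same column), hence at distance $2$ in the $6$-cycle — distance $1$ is impossible, and distance $3$ would, by the periodicity above, put them in the same row — so they would share a neighbour inside $W$; but $v_c$ is also a common neighbour, giving two common neighbours of a non-adjacent pair of the HSG, which is impossible. Thus each hexagon meets each of the six columns in exactly one vertex.
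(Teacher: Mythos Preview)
Your proof is correct. The permutation encoding is sound: each $\pi_{ab}$ is indeed a fixed-point-free involution by Lemma~\ref{l:Crosses}, the parity count rules out cycle types $(2,4)$ and $(3,3)$, and your Petersen bookkeeping to exclude the $18$-cycle goes through (though the ``short check'' determining the spoke partner of $u_{i_1}$ deserves one more line: $q_0$ is adjacent in $\Pi'$ to $q_2=u_{i_2}$, and the only common neighbour of $u_{i_1}$ and $u_{i_2}$ is $u$).

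The paper's argument is shorter and bypasses the permutation analysis entirely. It applies Lemma~\ref{l:PetProp} to a different pentagon, namely $u-u_i-(i,m_1)-(j,m_2)-u_j$ together with the extra edge $uu_k$, and reads off a hexagon of $H_3$ directly inside the resulting Petersen graph; since every edge of $H_3$ then lies on a $6$-cycle and $H_3$ is $2$-regular, the conclusion follows at once. In fact your own Petersen computation already proves this: once you show $(i_3,a_{-1})\sim(i_2,a_4)$, you have closed the hexagon $(i_3,a_{-1}),(i_1,a_0),\dots,(i_2,a_4)$ without ever needing the $18$-cycle hypothesis, so the parity reduction---while an attractive structural observation---is not actually needed. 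What your approach buys is insight into \emph{why} only hexagons can occur (the odd-permutation obstruction is pretty), at the cost of a longer argument; the paper's choice of pentagon, anchored at the hub vertices $u,u_i,u_j,u_k$, makes the Petersen bookkeeping substantially lighter.

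Your column argument is also correct and slightly more explicit than the paper's (which simply cites ``the properties of the subgraph induced by $W$''); the key point, that two vertices in the same column already share the common neighbour $v_c$ and hence cannot share another inside the hexagon, is exactly the right one.
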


\begin{proof}
    When restricting ourselves to three rows of $W$, the properties of the subgraph induced by $W$ forces every vertex to have degree $2$, and therefore $H_3$ is union of cycles.

    Once again we use Lemma \ref{l:PetProp} to find the unique Petersen graph containing the pentagon with vertices $u, u_i, u_j, (i,m_1), (j,m_2)$ and the edge between $u$ and $u_k$ (here $\{i,j,k\}=\{1,2,3\}$ indicate the three rows being considered in $H_3$, while $m_1$ and $m_2$ are some elements of $\{1,\dots,6\}$). It must contain the common neighbors $(k,m_3)$ and $(k,m_4)$ of $u_k$ and $(i,m_1)$ and of $u_k$ and $(j,m_2)$. The two remaining vertices of $H$ must be the neighbor of $(k,m_3)$ in Row $j$ (and so it is also a neighbor of $u_j$), and the neighbor of $(k,m_4)$ in Row $i$ (and so it is also a neighbor of $u_i$). The only way to complete the Petersen graph is if the last two vertices are also adjacent, completing the hexagon. Since we started with any arbitrary edge $(i,m_1)(j,m_2)$, it follows that all connected components of $H_3$ are hexagons. Since $H_3$ has $18$ vertices, all with degree $2$, there must be precisely three connected components.

    The fact that each hexagon has a vertex in each column follows directly from the properties of the subgraph induced by $W$.
\end{proof}

Up to relabeling of the columns, the subgraph $H_3$ in Lemma \ref{l:hexagons} is the one in Figure \ref{fig:hexagons}, where hexagons are distinguished in gray, blue and green. The symmetry properties of the HSG guarantee that any choice of three rows result in the same graph up to relabeling of columns (or equivalently, the neighbors of $v$).

    \begin{figure}[h]
\begin{center}
    \includegraphics[width=8cm]{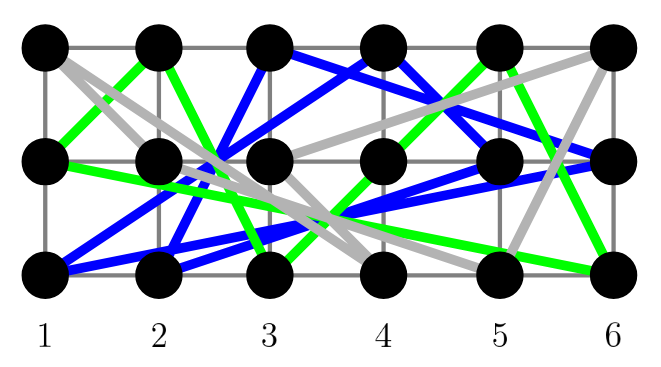}
\end{center}
  \caption{The graph $H_3$ induced by $3$ rows is union of $3$ hexagons.}
  \label{fig:hexagons}
\end{figure}

We now construct a map on a surface in the following way. We take the three hexagons of $H_3$ described in Lemma \ref{l:hexagons}, each with vertex set $\{1,\dots,6\}$ (we only keep the column of each vertex as an element of $W$). We assume them as $2$-dimensional entities with boundary, not just as vertices and edges. Then Lemma \ref{l:Crosses} together with the properties of the subgraph induced by $W$ force every edge to belong to precisely two such hexagons. Gluing the three hexagons along the edges in common we obtain the desired map; the underlying surface is a torus. The map arising from the hexagons in Figure \ref{fig:hexagons} is illustrated in Figure \ref{fig:FirstTorus}; it was denoted by $\{6,3\}_{(1,1)}$ in \cite{Moser}.

    \begin{figure}[h]
\begin{center}
    \includegraphics[height=3.8cm]{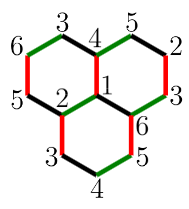}
\end{center}
  \caption{The map on the torus induced by the three hexagons in Figure \ref{fig:hexagons}. Pairs of edges in the boundary with the same vertices are to be identified. Black, red and green edges indicate edges between vertices in the bottom two rows, in the bottom and top row, and in the top two rows, respectively. }
  \label{fig:FirstTorus} 
\end{figure}

When considering the subgraph $H_4$ of $W$ induced by $4$ rows we automatically obtain four isomorphic maps on the torus, all with same vertex set $\{1,\dots,6\}$. Each of these four maps corresponds to a choice of three rows out of the four considered in $H_4$. Every edge $e$ belongs to precisely two maps on the torus, that correspond to the two choices of the third row of that map, besides the two that involve $e$. One of these maps is the one in Figure \ref{fig:FirstTorus}; the other three are illustrated in Figure \ref{fig:ThreeTori} (again, up to relabeling of rows and columns).

    \begin{figure}[h]
\begin{center}
    \includegraphics[height=3.8cm]{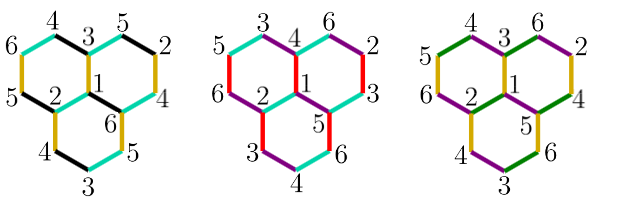}
\end{center}
  \caption{The remaining three maps on the torus induced by four rows of $W$. Blue, yellow and purple edges indicate edges between vertices in the first and fourth rows, in the second and fourth rows, and in the third and fourth rows, respectively.}
  \label{fig:ThreeTori}
\end{figure}

For each triple of rows in $H_4$ we consider the $3$ hexagons described in Lemma \ref{l:hexagons} to obtain a total of $12$ hexagons, each arising from three rows of $H_4$. Each edge of these $12$ hexagons belongs to two hexagons. It follows that again we can build a surface out of these hexagons. This surface turns out to be connected and again to be a torus. We label $X$, $Y$, $Z$ and $W$ the four rows in $H_4$, and label the edges between two rows as well as the hexagons determined by three rows by the juxtaposition of the names of the rows. With this understanding the map with $12$ hexagons is that in Figure \ref{fig:BigHex}, up to relabeling the vertices; it was denoted $\{6,3\}_{(2,2)}$ in \cite{Moser}.

    \begin{figure}[h]
\begin{center}
    \includegraphics[height=5.5cm]{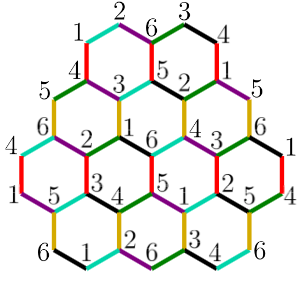}
\end{center}
  \caption{The map on the torus with $12$ hexagons obtained from $H_4$. Colors of the edges are explained in Figures \ref{fig:FirstTorus} and \ref{fig:ThreeTori}.}
  \label{fig:BigHex}
\end{figure}

We have now $5$ maps on the torus with the property that every hexagon belongs to two of them: it belongs to the map induced by the three rows containing its edges, and also to the map with $12$ hexagons. These $5$ maps form a structure of rank $4$ (it has four kinds of elements, namely vertices, edges, hexagons and maps) denoted by `maniplex' in \cite{Maniplexes}. When identifying these $5$ maps along the common hexagons the edges of the map with $12$ hexagons get identified by pairs, and the topological space carrying those $12$ hexagons is no longer a surface. The underlying graph of the entire structure is the complete graph with vertex set $\{1,2,3,4,5,6\}$ with double edges between $1$ and $2$, between $3$ and $4$ and between $5$ and $6$.

Now we have the ingredients to construct a hyperbolic $3$-manifold, which will be called in the following {\it HS-manifold}. Intuitevely, we construct a cusp based on each of these five tori and glue them by pairs along the common hexagons. Formally, we may take the paracompact hyperbolic honeycomb $\{6,3,4\}$. The building blocks of this honeycomb are `tessellations' of horospheres by hexagons (the vertices are in the horosphere but the interiors of the edges are not) where every vertex belongs to percisely three hexagons. The honeycomb is constructed so that every edge belongs to precisely four tessellations of horospheres. The symmetry group of $\{6,3,4\}$ is the Coxeter group $[6,3,4]$ and it acts transitively on the quadruples consisting of incident vertex, edge, hexagon and tessellation. The quotient of a suitable subgroup of $[6,3,4]$ will produce a manifold with $5$ cusps, each over a map on the torus like the ones described above. The details of the honeycomb $\{6,3,4\}$ and of this construction of the HS-manifold will be given in the next section. Alternatively, the HS-manifold can also be understood as the underlying topological space of the corresponding combinatorial map in the sense of \cite[Section 2]{Vince}, or as the order complex of the maniplex consisting of the five tori, in the sense of \cite[Section 2C]{ARP}.

%Given a hyperbolic hexagon with vertices at the same height, we define a hexagonal cone as

\section{Geometric construction of the HS-manifold}\label{sec:description}

As we mentioned before, in this section we will describe the details of the HS-manifold which was intuitively constructed in the previous section, and we will present some of its geometric properties.

Indeed, let $\mathcal{T}$ be the hyperbolic tetrahedron with an ideal vertex at $\infty$ and its other three vertices at $$v_0=\left( -\frac{\sqrt{2}}{\sqrt{3}}, \, 0 , \, \frac{1}{\sqrt{3}}\right), \hspace{0.4cm} v_1=\left(-\frac{\sqrt{3}}{{2 \sqrt{2}}},\, -\frac{1}{2\sqrt{2}}, \, \frac{1}{\sqrt{2}}\right), \hspace{0.4cm}  \text{and} \hspace{0.4cm}  v_2=(0,0,1).$$  Its dihedral angles are $\angle \mathcal{B}\mathcal{D}=\angle \mathcal{A}\mathcal{B}=\angle \mathcal{A}\mathcal{C}= \pi/2, \,  \angle \mathcal{C}\mathcal{D}=\pi/3, \,\angle \mathcal{A} \mathcal{D}=\pi/4, \,$ and $ \angle \mathcal{B} \mathcal{C}=\pi/6 \, $ (see Figure \ref{fig:tetraedro}), where $\mathcal{A}=\triangle (v_0, v_1, v_2)$, $\mathcal{B}=\triangle (v_1, v_2, \infty)$, $\mathcal{C}=\triangle (v_0, v_2, \infty)$ and $\mathcal{D}=\triangle (v_0, v_1, \infty)$ are the faces of $\mathcal{T}$ (here $\triangle (a,b,c)$ symbolizes the hyperbolic triangle with vertices $a, b$ and $c$). %{\color{red} Hay que decir cuáles tres de los 4 puntos del tetraedro están en cada plano}

\begin{figure}[h]
\begin{center}
    \includegraphics[height=7cm]{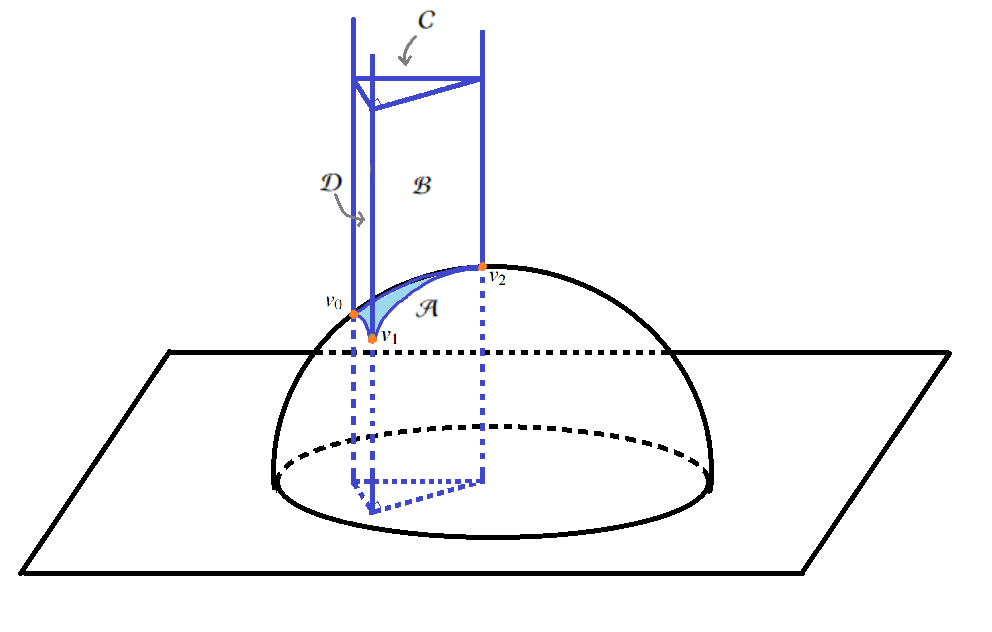}
\end{center}
  \caption{The hyperbolic tetrahedron $\mathcal{T}$.}
  \label{fig:tetraedro}
\end{figure}

We denote by $R_\mathcal{X}$ the reflection with respect to the plane that contains the face $\mathcal{X}$ and we consider the Coxeter group $$[6,3,4]=\langle R_{\mathcal{A}}, R_{\mathcal{B}}, R_{\mathcal{C}}, R_{\mathcal{D}} \rangle.$$ 

 Through $[6,3,4]\mathcal{T},$ we obtain a paracompact tessellation by tetrahedra of the space $\mathbb{H}^3$ (see \cite{Davis}). From this tessellation, we define vertices, edges, hexagons and tessellations of horospheres as follows:
\begin{itemize}
\item {\bf Vertices:} $[6,3,4]v_0$. %, where $v_0$ is the vertex of $\mathcal{T}$ where faces $\mathcal{A}, \mathcal{C},$ and $\mathcal{D}$ intersect. 
Note that not all the vertices of the tessellation by tetrahedra are vertices using our definition, only those in the orbit of $v_0$.
\vspace{-0.2cm}
\item {\bf Edges:} $[6,3,4]a_0$, where $a_0$ is the geodesic segment between $v_0$ and $R_\mathcal{B}(v_0).$ Note that all edges defined here are the union of two edges of different tetrahedra.
\vspace{-0.2cm}
\item {\bf Hexagons:} $[6,3,4]h_0$, where $h_0$ is the hexagon $\langle R_{\mathcal{B}}, R_{\mathcal{C}} \rangle a_0.$
\end{itemize}

%\begin{center}
%\includegraphics[width=1\linewidth]{juntos.png}
%\caption{Devices making a exchange of neighbor's information.}
%\end{center}
\vspace{-0.4cm}
\begin{itemize}
    \item {\bf Tessellations:} $[6,3,4]t_0$, where
    $t_0=\langle R_{\mathcal{B}}, R_{\mathcal{C}}, R_{\mathcal{D}} \rangle h_0$ is the `tessellation' by hexagons (honeycomb) with vertices in the horizontal horosphere at height 1 (the edges are not in the horosphere, but we shall still abuse notation and call $t_0$ a tessellation of that horosphere).  Note that for each $g \in [6,3,4]$, the element $g(t_0)$ is a tessellation by hexagons of the horosphere that is centered on $g(\infty)$, and that contains all vertices of $g(t_0)$. 
    \vspace{-0.1cm}
%\begin{center}
%\includegraphics[width=.6\linewidth]{teselacion2.png}
%\end{center}

\end{itemize}

%The notation of the faces of $\mathcal{T}$ is not entirely a coincidence. 
Given the vertices of a hexagon of $t_0$, we call {\it upward hexagonal cone} the convex hull of the point at infinity and these 6 points, and we call {\it downward hexagonal cone} the convex hull of these 6 points and perpendicular projection in $\mathbb{C}\subset \partial \mathbb{H}^3$ of the center of the hexagon (see Figure \ref{fig:cones}).

\begin{figure}[h!]
  \begin{subfigure}[b]{0.5\textwidth}
    \includegraphics[width=\textwidth, height=\textwidth]{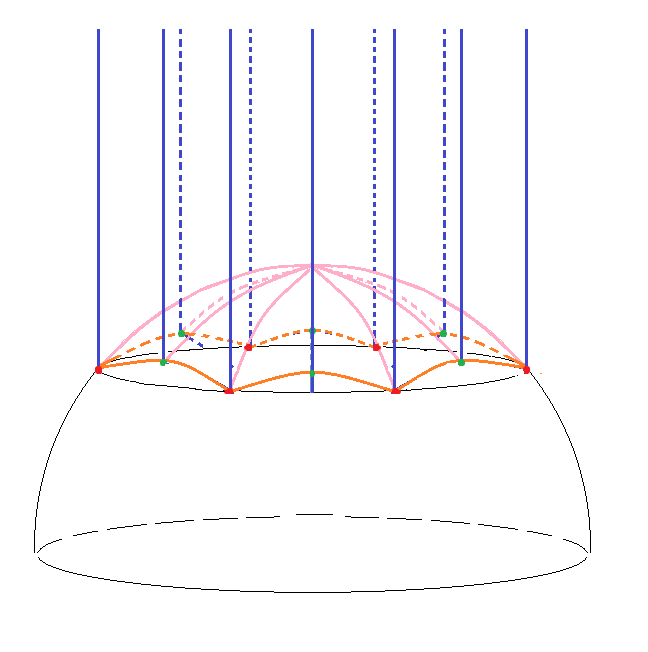}
    \caption{) Upward hexagonal cone.}
    \label{fig:cones1}
  \end{subfigure}
  \hfill
    \begin{subfigure}[b]{0.5\textwidth}
    \includegraphics[width=\textwidth, height=\textwidth]{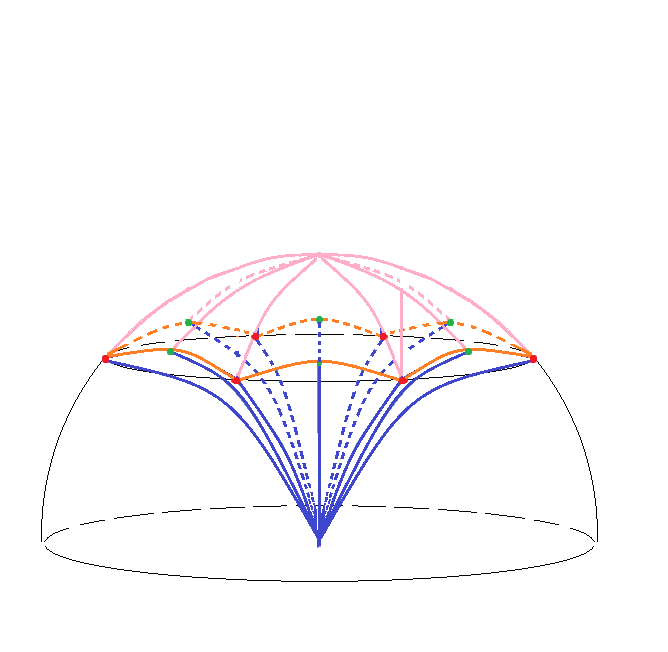}
    \caption{) Downward hexagonal cone.}
    \label{fig:cones2}
  \end{subfigure}
  \caption{}
  \label{fig:cones}
\end{figure}
To construct the HS-manifold, we consider the downward and upward hexagonal cones associated to the 12 hexagons of the map in Figure \ref{fig:BigHex} (see a scale of its geometric realization in Figure \ref{fig:etiquetas}). From now on, we will call $\mathcal{R}$ the union of these cones, and we identify its walls according to the combinatorics described in Section \ref{s_graph}.
\begin{figure}[h!]
\begin{center}
    \includegraphics[width=\textwidth]{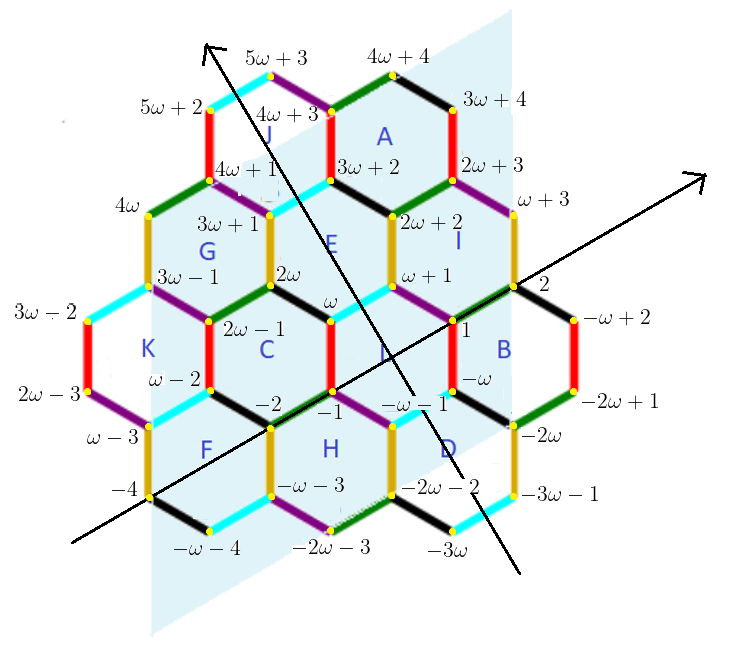}
\end{center}
  \caption{The map of Figure \ref{fig:FirstTorus} with coordinates on a scale of $1:\lambda$ ($=\sqrt{2/3}$), where $\omega=e^{i\pi/3}$ is the first root of unity.} 
  \label{fig:etiquetas}
\end{figure}
Indeed, the identifications of the walls of the upward hexagonal cones are simple, since the walls that are next to each other coincide (and can be understood as identified to each other), and the remaining pairs of boundaries of the upward cones are identified by the three translations $z \mapsto z+6\lambda$, $z\mapsto z+6\lambda\omega$ and $z\mapsto z+6\lambda(1+\omega)$ (one of them is a composition of the other two), where $\lambda= \sqrt{2/3}$ and $\omega=e^{i\pi/3}$ is the first root of unity. %Igual y es bueno ponerlas explícitamente
%{\color{red} Las traslaciones no me checan. ¿No faltan algunos coeficientes 6?}

The identifications in the downward hexagonal cones are a bit more complicated. %, since we have 4 cusps corresponding to the maps in Figures \ref{fig:FirstTorus} and \ref{fig:ThreeTori}. 
Figure \ref{fig:identif} shows the correspondence between the hexagons with (6)(12)/2=36 identifications of the triangles of their downward cones. Indeed, on the left side four configurations appear and each one has three hexagons. We identify the faces of the three downward hexagonal cones in each configuration as it is indicated by the numbering of the endpoints of each edge. Then, we identify each of these hexagons with its corresponding hexagon in the configuration on the right (see their letter labels). For example, in Figure \ref{fig:pegados} we represent the identifications that come from the configuration of the 3 hexagons labeled with A, B and C as follows: each triangle in Figure \ref{fig:pegados} represents the ideal triangular face of its downward hexagonal cone adjacent to the respective edge, and we identify faces whose respective triangles are the same color (just as their respective edges are identified in the map on the torus in Figure \ref{fig:FirstTorus}).

    \begin{figure}[h!]
\begin{center}
    \includegraphics[width=\textwidth]{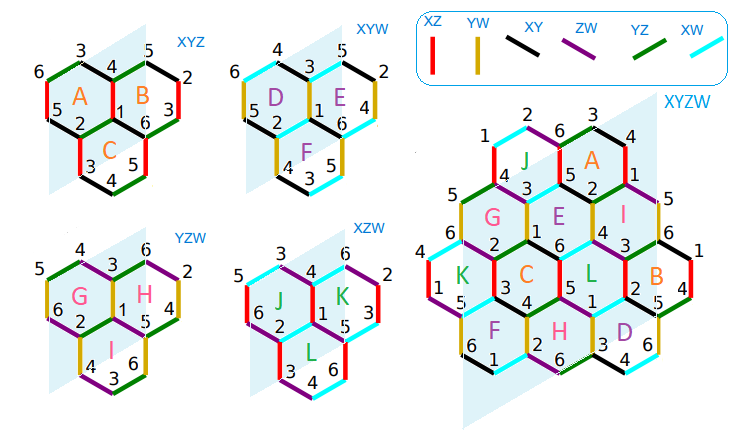}
\end{center}
  \caption{} 
  
  \label{fig:identif}
\end{figure}

These 36 identifications can easily be written as a composition of two inversions on unit spheres, translations by $\lambda$ and $\lambda \omega$ and maybe one rotation by $\pi$ around a vertical geodesic in $\mathbb{H}^3$. For instance, we can identify the ideal triangles associated to the yellow triangles in Figure \ref{fig:pegados} as follows. Specifically, we will send the vertices $(4\lambda (\omega+1), 1/\sqrt{3}), (\lambda (3\omega+4), 1/\sqrt{3})$ and $(3\lambda(\omega+1), 0)$, to the vertices $(\lambda (\omega-2), 1/\sqrt{3}), (-2\lambda , 1/\sqrt{3})$ and $(\lambda(\omega-1),0)$, respectively, through the composition of three functions:

\begin{enumerate}
    \item Translation that sends the hexagon $A$ into the hexagon $F$ $$(z,t) \mapsto (z-3\lambda (\omega+2),t).$$ 
    (It sends the triangle with vertices $(4\lambda(\omega+1), 1/\sqrt{3})$, $(\lambda(3\omega+4),1/\sqrt{3})$ and $(3\lambda(\omega+1), 1)$ to the triangle with vertices $(\lambda(\omega-2),1/\sqrt{3})$, $(-2\lambda,1/\sqrt{3})$ and $(-3\lambda, 1).$)
    
    \item Inversion in the unit sphere  associated to the hexagon $F$. (It sends the triangle with vertices $(\lambda(\omega-2),1/\sqrt{3})$, $(-2\lambda,1/\sqrt{3})$ and $(-3\lambda, 1)$ to the triangle with vertices $(\lambda(\omega-2),1/\sqrt{3})$, $(-2\lambda,1/\sqrt{3})$ and $\infty$.)
    \item Inversion in the unit sphere  associated to the hexagon $C$. (It sends the triangle with vertices $(\lambda(\omega-2),1/\sqrt{3})$, $(-2\lambda,1/\sqrt{3})$ and $\infty$ to the triangle with vertices $(\lambda(\omega-2),1/\sqrt{3})$, $(-2\lambda,1/\sqrt{3})$ and $(\lambda(\omega -1), 0)$.)

\end{enumerate}

Note that this procedure is not sufficient in some cases. However, it is enough to add a rotation by $\pi$ around a vertical geodesic in $\mathbb{H}^3$. For instance in order to identify the brown faces in Figure \ref{fig:pegados}, the translation  $(z, t) \to (z+\lambda(\omega -1), t)$ sends the hexagon $B$ into the hexagon $L$, then the rotation by $\pi$ around a vertical geodesic passing through the center of the hexagon $C$ sends the hexagon $L$  into the hexagon $K$. Finally, to achieve our objective, it is enough to apply the inversion on the sphere associated to $K$ followed by the inversion on the sphere associated to $C$. Any of the 36 identifications mentioned above can be carried out by a procedure analogous to one of the two previous examples. 

Now let's verify that indeed with this construction we obtain a 3-manifold. That is, after these identifications, we must verify that there are balls around the border points of the given fundamental region.
\begin{itemize}
    \item Since the walls of $\mathcal{K}$ are identified two by two and they are contained in hyperbolic planes, we know that such a ball exists around the points inside the walls.
    \item We will study the points inside the edges in two cases: the orange edges and the blue edges in Figure \ref{fig:cones} b).  \begin{itemize}
        \item A fan of 8 tetrahedra intersects at the orange edges: 4 of them correspond to upward hexagonal cones and the other 4 to downward hexagonal cones. (Recall that every edge appears twice in the torus with $12$ hexagons.) Furthermore, the dihedral angles of the tetrahedra that intersect at these edges are all $\pi/4,$ since they are isometric images of the angle $\angle \mathcal{A}\mathcal{D}$ of Figure \ref{fig:tetraedro}. 
        \item At the blue edges a fan of 6 tetrahedra intersects; two for the lower cone of each of the three hexagons in the corresponding configuration with three hexagons. The dihedral angles of the tetrahedra at such edges are $\pi/3,$ since they are isometric images of the angle $\angle \mathcal{C}\mathcal{D}$ of Figure \ref{fig:tetraedro}. Therefore, in both cases, they add up to $2\pi$.
    \end{itemize}
    \item 48 tetrahedra intersect around each vertex and all of them are isometric to $\mathcal{T}$ (see Figure \ref{fig:tetraedro}). The faces of each tetrahedron that coincide at a given vertex are copies of the faces $\mathcal{A}, \, \mathcal{C}, \, $ and $\mathcal{D}$ of Figure \ref{fig:tetraedro}. This is because in the fundamental region $\mathcal{R}$, 4 vertices are identified and in each of the vertices 12 tetrahedra intersect: 4 tetrahedra for each hexagon (two in the upward hexagonal cone and two in the downward hexagonal cone). Finally, it is a known fact that in the tessellation $[6,3,4]$ these same 48 tetrahedra intersect in the same fashion.
    
    \end{itemize}

%Ahora, al rededor de los vértices se intersectan 48 tetraedros y todos ellos isométricos entre sí con sección tetraedral correspondiente a las caras $ACD$ de la Figra []. Esto se debe a que en la Figura [] se identifican tres vértices y en cada uno de los vértices se intersectan 12 tetraedros: 4 tetraedros por cada hexágono (dos en el cono para arriba y 2 en el cono para abajo). Además, es un hecho conocido que en la teselación [ , 6,4] justo se intersectan estos mismos 48 tetraedros.

The group generated by the last 36 isometries and the 3 isometries that we described before to identify all the walls of the upward and downward hexagonal cones, respectively, is the Kleinian group $\mathcal{G}_{HS}$ associated to the HS-Manifold, equivalently, the HS-Manifold is the quotient $\mathbb{H}^3/\mathcal{G}_{HS}$. %(see a proof of the Poincare's polyhedron theorem in \cite{EP}).

Moreover, it is possible to express 20 of the 36 identifications in the downward hexagonal cones as a composition of the other 16, and one of the three identifications in the upward hexagonal cones as a composition  of the other two. Therefore, $\mathcal{G}_{HS}$ is generated by at most 18 of the 39 isometries discussed above.

%\textcolor{blue}{Daniel: ¿Será bueno que sea más explícita en todo lo anterior poniendo las isometrías? Recuerdo como que me habías dicho que no lo hiciera, pero quizás estoy mal }
%\textcolor{brown}{Yo opino que no es necesario ponerlas todas, pero sí habría que agregar más detalles de cómo es obtienen las 36. Tal vez sea bueno decir explícitamente cuáles son las traslaciones de la cúspide grande; dado que se tienen las coordenadas de los vértices del tetraedro base, esas traslaciones ilustrarían mejor de lo que estás hablando en este párrafo. También se podría agregar una o dos de las identificaciones de las cúspides chicas, no tanto con la expresión algebraica, sino diciendo qué región se debe identificar con cuál para obtener la isometría hiperbólica correspondiente}

  \begin{figure}[h]
\begin{center}
    \includegraphics[width=0.5\textwidth]{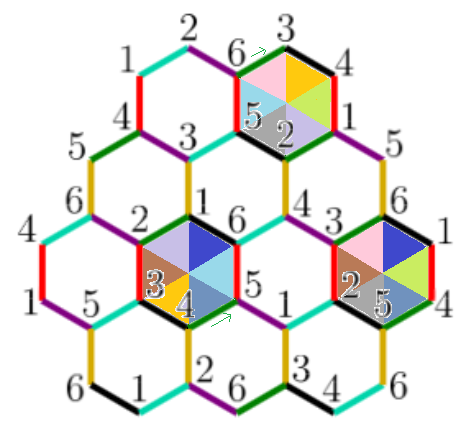}
\end{center}
  \caption{} 
  \label{fig:pegados}
\end{figure}

\begin{figure}[h]
\begin{center}
    \includegraphics[width=\textwidth]{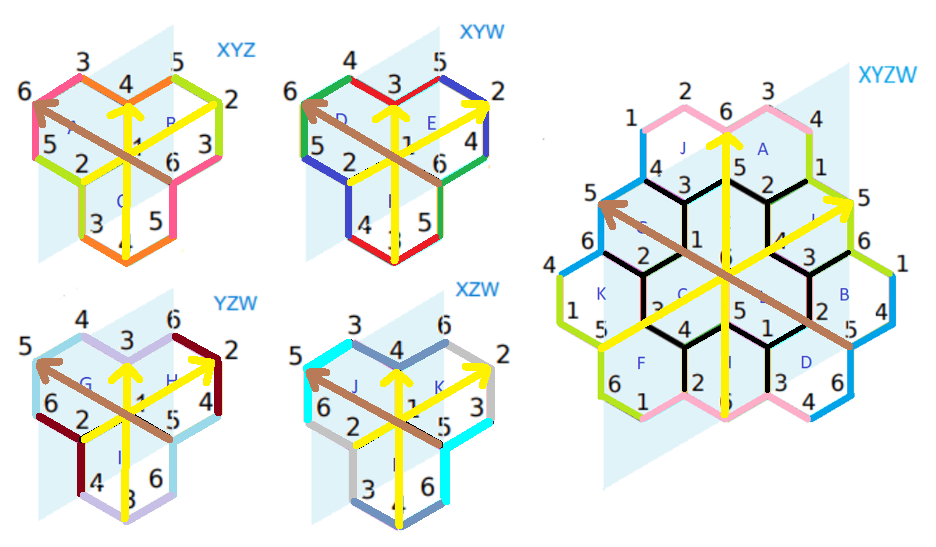}
\end{center}
  \caption{}
  \label{tras}
\end{figure}

%%%%%%%%%%%%%%%%%%%%%%%%%%%%%%%%%%%%%%%%%%%%%%%%%%%%%%%%%%%%%%%%%%%%%%%%%%%%%%%%%%%%%%%%%%%%%%%%%%%%%%%%%%%%%

\subsection{Some geometric properties of the HS-manifold}\label{s:properties}

{\bf I. Cusps and its volume.} \label{sub_volumen}

Since the HS-manifold is an orientable non-compact hyperbolic 3-manifold of finite volume, it has finitely many disjoint cusps, that is, unbouded ends of finite volume which are diffeomorphic to $T \times (0,\infty)$, where $T$ is a flat torus. The geometry of a cusp is completely determined by the geometry of the flat torus, since the shape of $T \times \{t\}$ does not depend on $t$, that is to say, they are represented by the same element on the moduli space of tori.

In this case, note that in the fundamental region explained in the previous section, there are 5 different orbits in $\partial (\mathbb{H}^3)$, which correspond to the 5 cusps of HS-manifold. Moreover, all of this cusps have the same shape $\langle 1, e^{i \pi/3} \rangle$, although different volume: 4 of them are the {\it small cusps} and they are isometric with each other, and the other one is a {\it big cusp}. In fact the volume of the maximal big cusp is 4 times the volume of any of the maximal small cusps. Indeed, with each configuration of Figure 10 we will construct a cusp, so that the 4 isometric cusps will correspond to the 4 small configurations and the other cusp will correspond to the big configuration.

The large cusp is the quotient of the horoball $\{(z,t)\in \mathbb{H}^3: t>1\}$ by the group of isometries $\langle z \mapsto z+6\lambda, z\mapsto z+6\lambda\omega \rangle$ (the cusp shape is the blue shaded part in the big configuration of Figure \ref{fig:identif}). The above is equivalent to consider the intersection of the horoball $\{(z,t)\in \mathbb{H}^3: t>1\}$ with the fundamental region $\mathcal{R}$ and quotient it with the group $\mathcal{G}_{HS}$. % (namely, identify their faces through the translations mentioned before: $z \mapsto z+6\lambda,$ and $z\mapsto z+6\lambda\omega$). 

We proceed analogously with the small cusps. Each of them corresponds to one of the configurations of three hexagons in Figure 10. For example, we are going to build the cusp associated with the 3 hexagons with labels A, B and C. 
These three hexagons correspond to 3 downward hexagonal cones, whose walls are identified according to the colors of the Figure 11, and therefore, vertical projections  at $ \partial (\mathbb{H}^3) $ of the centers of the three hexagons are identified (by identifying the yellow faces, for example, we identify vertical projections of the centers of the hexagons A and C, and by identifying the pink faces, the centers of the hexagons A and B). In fact, by construction, this cusp is isometric to the intersection of the space $\{(z,t)\in \mathbb{H}^3: t>1 \}$ with the union of the 3 upward hexagonal cones over the 3 hexagons labeled A, B and C, of the small configuration from the upper left corner in Figure \ref{fig:identif}, after identify their walls with the translations $z \mapsto z+3\lambda$ and $z \mapsto z+3\lambda \omega$. %(consider same dimensions as the Figure \ref{fig:etiquetas}). 

%{\color{red} Esto requiere alguna aclaraci\'on o explicaci\'on adicional. Dices que hay 5 c\'uspides (hasta ahí va bien), y que para construirlas basta considerar los hex\'agonos planos de $\mathcal{H}$ y tomar su producto con $(0,\infty)$. Esto s\'olo te dar\'{\i}a una c\'uspide (y no 5), ?`o me perd\'{\i} de algo?}

 \begin{figure}[h]
\begin{center}
    \includegraphics[width=\textwidth]{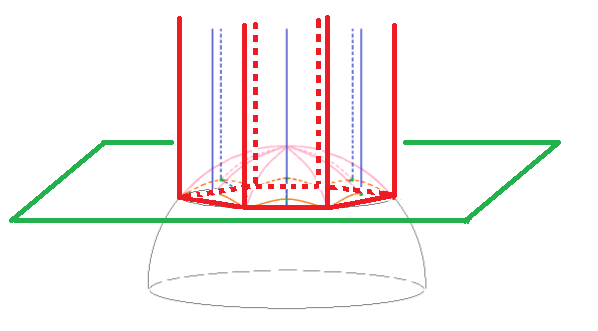}
\end{center}
  \caption{} 
  \label{fig:hexagono_cuspidal}
\end{figure}

%Indeed, a cusp is the union of the 12 upward hexagonal cones over the 12 hexagons of the big configuration, after the identifications associated to three translations. Each of the other 4 cusps is the union of only 3 downward hexagonal cones over its 3 hexagons, after the identifications associated to a conjugation of three translations.\\

The volume of the tetrahedron $\mathcal{T}$ (see Figure \ref{fig:tetraedro}) is well-known (see \cite{RM} or \cite{JMMY}). Since the HS-manifold is the union of 288 tetrahedra, and all of them are isometric with each other, its volume is $$288 \left(-\frac{5}{6} \int_0^{\frac{\pi}{3}} \log |2 \sin t| dt \right) \approx \text{81.1953}. $$

%%%%%%%%%%%%%%%%%%%%%%%%%%%%%%%%%%%%%%%%%%%%%%%%%%

{\bf II. Its full group of isometries.}\label{sub_isometrias} 
Next we determine the full isometry group (isometries that preserve and isometries that reverse orientation). We start by considering the three hyperbolic reflections $R_0$, $R_1$ and $R_2$ by planes perpendicular to the horospheres based at the ideal point of the large cusp. The intersections of $R_0$, $R_1$ and $R_2$ with the configuration of $12$ hexagons are indicated as the green, orange and blue lines in Figure \ref{sim}, respectively.

\begin{figure}[h]
  \begin{subfigure}[b]{0.4\textwidth}
    \includegraphics[width=\textwidth, height=\textwidth]{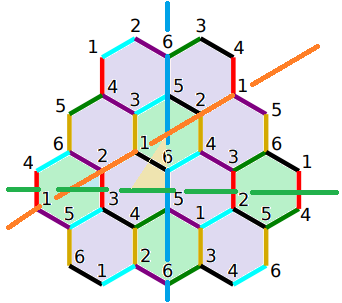}
    \caption{) Three plane reflections in the isometry group.}
    \label{sim1}
  \end{subfigure}
  \hfill
    \begin{subfigure}[b]{0.45\textwidth}
    \includegraphics[width=\textwidth, height=\textwidth]{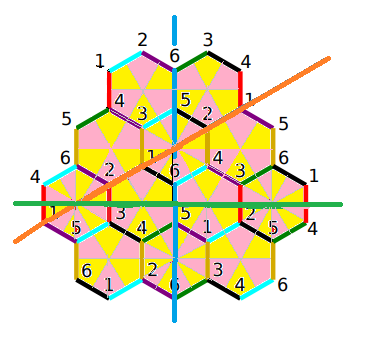}
    \caption{) Fundamental domain of $\langle R_0, R_1, R_2 \rangle$.}
    \label{sim2}
  \end{subfigure}

  \caption{The isometry group of the HS-manifold contains a group isomorphic to $[4,3]$.}
  \label{sim}
\end{figure}

\begin{proposition}\label{prop:subgroup43}
    The group $\langle R_0, R_1, R_2 \rangle$ is a subgroup of the group of isometries of the HS-manifold, and it is isomorphic to the full isometry group of the cube. 
\end{proposition}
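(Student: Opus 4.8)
I will establish the statement in two stages: first that $R_0,R_1,R_2$ descend to genuine isometries of the HS-manifold, and then that the group they generate is $[4,3]$, the full symmetry group of the cube.

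\emph{Descent.} Since the HS-manifold is $\mathbb{H}^3/\mathcal{G}_{HS}$, a hyperbolic isometry $R$ of $\mathbb{H}^3$ induces a well-defined isometry of the quotient exactly when $R\,\mathcal{G}_{HS}\,R^{-1}=\mathcal{G}_{HS}$ (and, by Mostow rigidity, every isometry of this finite-volume manifold arises this way). So the first step is to check that each $R_i$ normalizes $\mathcal{G}_{HS}$, which I would do on the fundamental region $\mathcal{R}$ — the union of the $24$ hexagonal cones over the $12$ hexagons of Figure~\ref{fig:BigHex} — together with its side-pairing. The mirror planes of $R_0,R_1,R_2$ are vertical over the ideal point of the large cusp, and Figure~\ref{sim1} exhibits their traces on the configuration of $12$ hexagons as symmetry lines that respect all the edge-colourings, hence all the gluing data of Section~\ref{s_graph}. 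It follows that each $R_i$ preserves $\mathcal{R}$, permutes the $288$ copies of the characteristic tetrahedron $\mathcal{T}$, and carries each wall-identification to a wall-identification, i.e.\ $R_i\,\mathcal{G}_{HS}\,R_i^{-1}=\mathcal{G}_{HS}$. (As a consistency check, each $R_i$ then permutes the five cusps, fixing the large one.) Hence $G:=\langle R_0,R_1,R_2\rangle$ is a subgroup of $\mathrm{Isom}(\text{HS-manifold})$ generated by three involutions.

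\emph{Faithful action on the large cusp.} Each $R_i$ acts by an isometry on the flat cross-sectional torus $T\cong\mathbb{C}/\langle 6\lambda,6\lambda\omega\rangle$ of the large cusp, giving a homomorphism $G\to\mathrm{Isom}(T)$. This map is injective: an isometry of a connected hyperbolic manifold that fixes a cusp cross-section pointwise fixes a whole cusp neighbourhood and is therefore the identity. So it suffices to compute the subgroup of $\mathrm{Isom}(T)$ generated by the three explicit reflections of $T$ coming from $R_0,R_1,R_2$, whose axes are drawn in Figure~\ref{sim2}. These reflections generate a finite reflection group on the hexagonal torus $T$, obtained from a Euclidean plane reflection group by killing a finite-index translation sublattice; using the coordinates of Section~\ref{sec:description} I would check that this group has order $48$ and is isomorphic to $[4,3]\cong S_4\times C_2$, the full symmetry group of the cube — for instance by matching a Coxeter presentation and counting (equivalently, Figure~\ref{sim2} is a fundamental domain for $G$ built from $288/48=6$ copies of $\mathcal{T}$), or by recognising it as the group of permutations of the six vertex-classes $\{1,\dots,6\}$ preserving the partition into the three pairs joined by double edges in Section~\ref{s_graph}, which is $C_2\wr S_3\cong[4,3]$. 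Injectivity of $G\to\mathrm{Isom}(T)$ then yields $G\cong[4,3]$.

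\emph{Main obstacle.} The subtle point is the interplay between $\langle R_0,R_1,R_2\rangle$ regarded inside $\mathrm{Isom}(\mathbb{H}^3)$ — which is \emph{infinite}, since the three vertical mirror planes cut out a Euclidean triangle on $\partial\mathbb{H}^3$ — and the torsion-free discrete group $\mathcal{G}_{HS}$. Because $\mathcal{G}_{HS}$ has no elliptic elements, a pairwise product $R_iR_j$ can drop in order on passing to the quotient only if it is parabolic (the corresponding mirror planes parallel), and then only by absorbing a power into the peripheral subgroup of the large cusp. Thus the whole identification hinges on pinning down $\langle R_0,R_1,R_2\rangle\cap\mathcal{G}_{HS}$ exactly: it must contain precisely the parabolic translations effecting the collapse from the infinite Euclidean reflection group down to the order-$48$ group $[4,3]$ — no more (which would make $G$ smaller) and no less (which would leave $G$ infinite). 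This is exactly where the explicit coordinates and the description of the peripheral subgroup of the large cusp are indispensable; once that intersection is understood, the descent, the recognition of the group as $[4,3]$, and the order count are routine.
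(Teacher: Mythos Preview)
Your outline is correct and follows essentially the same route as the paper: verify descent by checking that each $R_i$ respects the combinatorial gluing data (hence permutes the $288$ tetrahedra and normalizes $\mathcal{G}_{HS}$), then identify the group by its action on the large-cusp torus, using a fundamental-domain count to reach order $48$. The paper carries out the Coxeter-relation check more explicitly (computing the orders of $R_1R_2$, $R_0R_2$, and the glide reflection $R_0R_2\cdot R_1$ directly, the last being exactly the ``parabolic collapse'' you flag as the main obstacle), whereas you package the same computation as determining $\langle R_0,R_1,R_2\rangle\cap\mathcal{G}_{HS}$ inside the peripheral lattice; your alternative recognition of the group as $C_2\wr S_3$ acting on the six vertex-classes preserving the three double-edge pairs is a pleasant variant not in the paper.
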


\begin{proof}
To verify that each of the reflections is an isometry it suffices to note that it permutes the vertices, the edges, the hexagons and the four small cusps (all of them fix the large cusp). In doing so, it permutes the $288$ ideal tetrahedra from which the maniplex was constructed, since the corners of each tetrahedron are a vertex, a midpoint of edge, a center of hexagon, and the ideal point of a cusp. For instance, the reflection indicated in blue in Figure \ref{sim} induces the permutation $(1,4)(2,3)$ on the vertex set whereas it transposes tori $XYZ$ and $XZW$, and fixes each of tori $XYW$ and $YZW$. The permutations of the edges and of the hexagons can be derived from those of the vertices and tori. In contrast, the hyperbolic reflection whose reflection plane intersects the large configuration in the blue line in Figure \ref{fig:NotRefl} does not induce an isometry of the maniplex, since otherwise vertex 6 should be swapped with vertex 1 and also with vertex 2, which is impossible (consider the images of the pink points in Figure \ref{fig:NotRefl}).

    \begin{figure}[h]
\begin{center}
    \includegraphics[width=6cm]{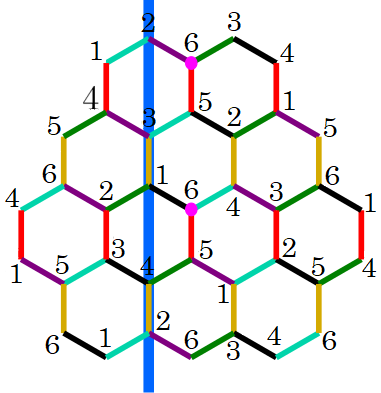}
\end{center}
  \caption{There is no reflection of the configuration along the blue line.}
  \label{fig:NotRefl}
\end{figure}

Next we show that the group $\langle R_0, R_1, R_2 \rangle$ generated by the plane reflections mentioned above is isomorphic to the Coxeter group $[4,3]$ with presentation
$$\langle \rho_0, \rho_1, \rho_0 \, : \, \rho_0^2=\rho_1^2=\rho_2^2=(\rho_0\rho_2)^2=(\rho_0\rho_1)^4=(\rho_1\rho_2)^3 \rangle,$$
which is the full symmetry group of the cube (see \cite{Humphreys}). The angle between the reflection planes $\Pi_1$ and $\Pi_2$ of $R_1$ and $R_2$ is $\pi/3$, and hence $R_1R_2$ has order $3$. Next we consider the halfturn $T=R_0R_2$ about the intersection $\ell$ of the reflection planes of $R_0$ and $R_2$. Clearly, $T$ commutes with $R_2$, and hence the order of $TR_2$ is $2$. Now, the isometry $TR_1$ is a glide reflection about the plane perpendicular to $\Pi_1$ that contains $\ell$ with translation component given by twice the distance from $\Pi_1$ to $\ell$. The order of this glide reflection $TR_1$ is $4$ since the translation by $4$ times the translation component of $TR_1$ is the sum of two of the translations that determine the identifications of the torus. It follows that $\langle R_0, R_1, R_2 \rangle = \langle T, R_1, R_2 \rangle$ is a subgroup of $[4,3]$. On the other hand, the triangle comprised by the reflection lines in Figure \ref{sim} (a) does not tile the region consisting of the $12$ hexagons. A standard procedure shows that the smaller triangles in Figure \ref{sim} (b) are in fact fundamental regions of the group of isometries of the torus generated by those three line reflections. Since there are precisely $48$ small triangles and also $48$ elements in $[4,3]$ we can conclude that $\langle R_0, R_1, R_2 \rangle \cong [4,3]$.
\end{proof}

Now we have a lower bound on the number of isometries of the HS-manifold. Next we give a tool to obtain an upper bound of this number.

\begin{proposition}\label{p:ItFixesStructure}
    Every isometry of the HS-manifold permutes the $288$ ideal tetrahedra described in
Section \ref{sec:description}. Furthermore, every isometry maps a corner of a given type (vertex, midpoint of edge, center of hexagon or ideal point) to a corner of the same type.
\end{proposition}

%A rigorous proof of Proposition \ref{p:ItFixesStructure} may be too long and technical. We give a sketch of the proof instead, where every step can be ...
\begin{proof}
For each cusp $c_i$ ($i \in \{1,2,3,4,5\}$) we may construct a fundamental region $\mathcal{X}_i \subseteq \mathbb{H}^3$ of the Kleinian group $\mathcal{G}_{HS}$ described in Section \ref{sec:description}, where $c_i$ has only one point $P_i$ at infinity. We consider the set of all horospheres centered at $P_i$ whose intersection with $\mathcal{X}_i$ is a topological torus. This set is bounded by a horosphere $\mathcal{H}_i$ whose intersection with $\mathcal{X}_i$ is a quotient of a torus by the identification of only finitely many points. When $c_i$ is the cusp over the large configuration these points are the midpoints of the $18$ edges; each of them appears twice in the configuration (note that these are the only points of the edges in that horosphere). However, if $c_i$ is the cusp over one of the small configurations, say $XYZ$, then the intersection of $\mathcal{X}_i$ with any horosphere containing a vertex or any point of an edge of the configuration is still a topological torus. It only stops being so when the horosphere contains the midpoint of one edge (in fact, of all nine edges) of the large configuration that is not in $XYZ$. From here it follows that
\begin{enumerate}
    \item The cusp over the large configuration must be preserved by every isometry.
    \item The group of isometries must permute the midpoints of the edges when the HS-manifold is constructed as in Section \ref{sec:description}.
\end{enumerate}

Next we take spheres centered at the midpoints of edges, whose radius is half the length of the edge. The $6$ vertices are recovered as the only $6$ points of the manifold that belong to precisely $6$ of those spheres. In fact, each such sphere contains only two of those $6$ points, which are precisely the endpoints of the edge containing the center of the sphere. In this way we recover the vertices and edges. From there the hexagons are recovered easily, since each pair of edges incident at a given vertex with an angle of $\pi/2$ can be extended to a unique planar regular hexagon having angle $\pi/2$ between each pair of consecutive edges.

The previous discussion shows that the isometry group permutes each of the following sets: the set of vertices, the set of midpoints of edges, the set of hexagons (and hence, also the set of their centers), and the set of cusps. This allows us to recover the $288$ ideal tetrahedra in a unique way and the proposition follows.
\end{proof}

We are ready to specify the group of isometries of the HS-manifold.

\begin{theorem}
    The full isometry group of the HS-manifold %defined in Section \ref{sec:description} 
    is isomorphic to the full isometry group of the cube.
\end{theorem}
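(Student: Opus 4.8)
The plan is to squeeze the order of the isometry group from both sides. Proposition~\ref{prop:subgroup43} already produces $48$ isometries, so what remains is an upper bound: since a group that contains $[4,3]$ and has order at most $48$ must equal $[4,3]$, it suffices to show that the HS-manifold has at most $48$ isometries, after which the identification with the full isometry group of the cube is immediate from Proposition~\ref{prop:subgroup43}.

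I would obtain the upper bound from the action on the six vertices. By Proposition~\ref{p:ItFixesStructure} every isometry permutes the six vertices and the eighteen edges compatibly with incidence; since a pair of vertices is joined by two edges exactly when it is one of $\{1,2\}$, $\{3,4\}$, $\{5,6\}$, the induced vertex permutation must preserve the partition $\big\{\{1,2\},\{3,4\},\{5,6\}\big\}$. Hence the vertex action is a homomorphism $\Phi$ from the isometry group into the group of such permutations, which is $C_2\wr S_3$, of order $48$ (this group is isomorphic to $[4,3]$: both are the group of signed $3\times 3$ permutation matrices). So the whole point is to prove that $\Phi$ is injective.

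To see that $\Phi$ is injective, suppose an isometry $\phi$ fixes all six vertices. If two vertices are not a doubled pair, the edge between them is unique, so $\phi$ maps it to itself and, fixing both of its endpoints, fixes it pointwise (an isometry fixing two points of a geodesic fixes the whole geodesic, and here it cannot reverse the edge). Fix a vertex $v$: the four edges from $v$ to the four vertices not paired with it are of this kind, so $d\phi_v$ fixes their four directions at $v$. These directions span $T_vM$ --- equivalently, the two parallel edges of a doubled pair do not emanate from their common endpoint in opposite directions, which one reads off from the fact that the six edges at a vertex of the honeycomb $\{6,3,4\}$ point toward the vertices of an octahedron. Therefore $d\phi_v$ is the identity, so $\phi$ is the identity on the connected HS-manifold. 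This shows there are at most $48$ isometries; combined with Proposition~\ref{prop:subgroup43}, the isometry group has order exactly $48$ and is isomorphic to $[4,3]$, the full isometry group of the cube.

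I expect the delicate point --- the main obstacle --- to be exactly the spanning claim in the last paragraph, namely that the edges at a vertex are not all coplanar. (Even if they were coplanar one should still win: then $d\phi_v$ would be a reflection, $\phi$ an orientation-reversing involution, hence a reflection in a totally geodesic surface, which would then contain all six vertices; but no isometric reflection of the HS-manifold can fix every vertex, as the reflections exhibited in Proposition~\ref{prop:subgroup43} already move vertices.) A cleaner but more combinatorial alternative is to use Proposition~\ref{p:ItFixesStructure} in full: an isometry acting trivially on the incidence complex fixes some ideal tetrahedron setwise, hence fixes each of its four corners individually (they have distinct types) and is therefore the identity of $\mathbb{H}^3$; one then only has to check that the automorphism group of that incidence complex has order $48$, for instance by a symmetry count analogous to the proof of Proposition~\ref{prop:subgroup43}.
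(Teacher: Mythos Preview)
Your route via the vertex action is genuinely different from the paper's, and it is pleasant that it identifies the target group $C_2\wr S_3\cong[4,3]$ directly. Unfortunately the injectivity step has a real gap. You claim the two edges of a doubled pair do not leave their common endpoint in opposite directions; in fact they do. Proposition~\ref{p:geodesics}(3) records that in a type~I tessellation the two distinct edges between the same pair of vertices make up a closed geodesic of two consecutive edges, so at each endpoint they are collinear. Hence the doubled edges sit at antipodal vertices of the octahedral link, and the four single edges from $v$ occupy the equatorial square --- they are coplanar and do not span $T_vM$. A nontrivial $\phi$ fixing all six vertices could then have $d\phi_v$ equal to the reflection in that plane, and nothing in your main line excludes this.

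Your fallback is circular: that the reflections produced in Proposition~\ref{prop:subgroup43} move vertices says nothing about a hypothetical \emph{further} reflection outside $\langle R_0,R_1,R_2\rangle$, which is precisely the element you must rule out. Your ``cleaner combinatorial alternative'' is in spirit what the paper does, but you stop short of the real work: bounding the automorphism group of the incidence complex by $48$ is not free.

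For comparison, the paper argues as follows. From Proposition~\ref{p:ItFixesStructure} the isometry group acts on the $288$ tetrahedra; connectivity (plus the fact that corner types are preserved) makes this action free, so the order divides $288$. Since every isometry fixes the large cusp it permutes the $144$ tetrahedra based there, cutting the bound to $144$. Finally the failed reflection of Figure~\ref{fig:NotRefl}, already analysed in the proof of Proposition~\ref{prop:subgroup43}, shows a specific pair of tetrahedra is not related by any isometry, ruling out $144$; the only multiple of $48$ properly dividing $144$ is $48$ itself.
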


\begin{proof}
In view of Proposition \ref{prop:subgroup43} we only need to show that the isometry group of the HS-manifold is precisely the group generated by the plane reflections $R_0$, $R_1$ and $R_2$ defined above. Furthermore, we know from Proposition \ref{p:ItFixesStructure} that it must permute the $288$ tetrahedra that were used as building blocks of the HS-manifold.

Given an isometry $R$ and a tetrahedron $\mathcal{T}_0$, the four tetrahedra that share a triangular face with $\mathcal{T}_0$ must be mapped by $R$ to four tetrahedra that share a triangular face with $R\mathcal{T}_0$. The connectivity of the HS-manifold forces the image of each tetrahedron to be uniquely determined by the image of $\mathcal{T}_0$. It follows that the action of the isometry group on the trahedra is free and therefore the number of isometries is a divisor of $288$. Furthermore, every isometry fixes the cusp over the large configuration, and so the $144$ tetrahedra in that cusp must be preserved. This implies that the number of isometries is a divisor of $144$. Proposition \ref{prop:subgroup43} already exhibits $48$ isometries, and from its proof it follows that a tetrahedron cannot be mapped to its image under the reflection indicated with blue in Figure \ref{fig:NotRefl}. Then the number of isometries must be a multiple of $48$ that is a proper divisor of $144$. We can conclude that $\langle R_0, R_1, R_2 \rangle$ is the entire group of isometries.
\end{proof}

\begin{remark}\label{r:Hexagons}
There are 2 orbits of hexagons under the symmetry group. One orbit (green hexagons in Figure \ref{sim1}) contains the $4$ hexagons invariant under $6$ plane reflections, while the other one (purple hexagons in Figure \ref{sim1}) contains the $8$ hexagons invariant only under $3$ plane reflections. The edges of the hexagons in the first orbit constitute one orbit of edges, while the other orbit consists of the edges whose endpoints are vertices of distinct hexagons in the first orbit. The second orbit of edges are precisely the double edges; for example, the two edges between vertices $3$ and $4$ (the black one and the brown one) are not contained in any hexagon in the first orbit.
\end{remark}

%%%%%%%%%%%%%%%%%%%%%%%%%%%%%%%%%%%%%%%%%%%%%%%%%%%%%%%
\vspace{0.5cm}

{\bf III. Some of its geodesics.} 

Note that the geometry of a plane in $\mathbb{H}^3$ is the same that the geometry of the Poincaré disk. This is easy to see, since there is an isometry between any plane in $\mathbb{H}^3$, and the vertical plane $y=0$ in $\mathbb{H}^3$, which is trivially isometric with $\mathbb{H}^2$. Finally, it is known that $\mathbb{H}^2$ and the Poicaré Hyperbolic Disk $\mathbb{D}^2$ are isometric. Then, each hexagon can be represented in $\mathbb{D}^2$ since each hexagon belongs to a hyperbolic plane in $\mathbb{H}^3$. Furthermore, such a plane is tessellated by congruent hexagons, which is part of the paracompact tessellation that we explained in Section \ref{sec:description}. In that tessellation of the hyperbolic plane there are 4 hexagons around each vertex. Indeed, by reflecting through the planes containing the faces in the orbits of $\mathcal{A}$ and $\mathcal{D}$
%\textcolor{blue}{¿Será bueno hacer un dibujo de esto? ¿o se entiende?}
that contain the edge $\mathcal{A} \cap \mathcal{D}$, we get copies of the tetrahedron $\mathcal{T}$ that share this edge (see Figure \ref{fig:tetraedro}).
%\Daniel{Tal vez s\'{\i} sea bueno hacer un dibujo. Antes de que lo hagas, podr\'{\i}amos revisar esta parte juntos; tal vez sea suficiente con cambiar/agregar algo de texto}
Since $\angle \mathcal{A}\mathcal{D} = \pi/4$, we know that there are 8 copies of $\mathcal{T}$, and hence four copies of $\mathcal{A}$ and four of $\mathcal{D}$; one of the copies of $\mathcal{A}$ is coplanar to the original $\mathcal{A}$. By proceeding similarly with the edge $\mathcal{A} \cap \mathcal{C}$ we obtain 4 copies of $\mathcal{T}$ forcing the only copy of $\mathcal{A}$ to be coplanar to $\mathcal{A}$.
Finally, the spherical angle in $\mathcal{A}$ at the vertex $v_0$ is $\pi/4$ and therefore, after proceeding in an analogous way with the appropriate edges, we get 8 copies of $\mathcal{A}$ all of them coplanar that share the vertex $v_0$. This results in four coplanar hexagons containing $v_0$.
%{\color{red} Aqu\'{\i} hay que decir que el plano hiperb\'olico dentro de $\mathbb{H}^3$ que contiene a ese hex\'agono est\'a teselado por hex\'agonos dentro de la teselaci\'on paracompacta de la que se habl\'o al principio de la secci\'on de la construcci\'on. En esa teselaci\'on del plano hiperb\'olico hay 4 hex\'agonos alrededor de cada v\'ertice (ser\'{\i}a bueno decir por qu\'e) }%{\color{blue} Lo he estado pensando, no me sale, y creo que no es fáci de decir... peeero, creo que no lo cnecesitamos}

Figure \ref{fig:geodesics} shows the tessellations of a hyperbolic plane centered at a purple hexagon and at a green hexagon of $\{6,3,4\}$ (see Remark \ref{r:Hexagons}). We will call them {\it type I tessellation} and {\it type II tessellation}, respectively. Recall that the graph embedded in the HS-manifold is the complete graph in the vertex set $\{1,\dots,6\}$, with double edges between $1$ and $2$, between $3$ and $4$, and between $5$ and $6$. Purple hexagons alternate between single and double edges, whereas green hexagons only have single edges. Since the hyperbolic line of $\mathbb{H}^3$ containing an edge between preimages under $\mathcal{G}_{HS}$ of vertices $1$ and $2$ (resp. $3$ and $4$ or $5$ and $6$) is tessellated by edges between preimages of vertices $1$ and $2$ (resp. $3$ and $4$ or $5$ and $6$), every hexagon of a tessellation of type I is purple, and every hexagon of a tessellation of type II is green.
%{\color{red}Ya no recuerdo, ?`En la teselaci\'on centrada en un hex\'agono morado hay puros hex\'agonos morados? ?`Pasa lo mismo con los azules? Ser\'{\i}a bueno que eso se aclarara aqu\'{\i}. Esto ayudar\'{\i}a a que en la Proposici\'on \ref{p:geodesics} en el enunciado se pudiera decir cu\'ando es la uni\'on de 2 alturas y cu\'ando es la uni\'on de 4 alturas (depende del color de los hex\'agonos en cuesti\'on, supongo)} \Yes{Esto aún no lo pienso, pero haré una pausa y lo pienso cuando acabe mi presentación, que a más tardar debe ser el viernes}
 %On the left side of the figure, appears the hyperbolic plane associated with a purple hexagon, and on the right side, appears the hyperbolic plane associated with a green hexagon.

\begin{figure}
 \centering
  \subfloat[) Type I]{
   \label{fig_typeI}
    \includegraphics[width=0.5\textwidth]{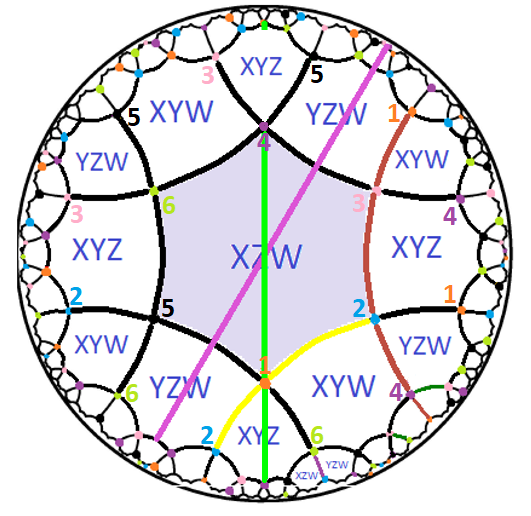}}
  \subfloat[) Type II]{
   \label{fig_typeII}
    \includegraphics[width=0.5\textwidth]{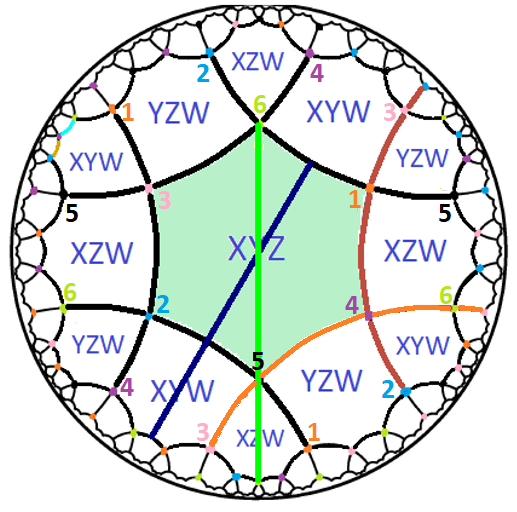}}
\vspace{0.3cm}
 \caption{Some closed simple geodesics of the HS-manifold, viewed in the tessellations of a plane which contains a hexagon.}
 \label{fig:geodesics}
\end{figure}
For the next result, a {\it diagonal} will be a geodesic segment in $\mathbb{D}^2$ with end points at opposite vertices of a hexagon, and a {\it height} a geodesic segment with end points at the midpoints of opposite edges of a hexagon. %{\color{red} Aqu\'{\i} usaste $\mathbb{D}^2$ en vez de $\mathbb{D}^2$ (como arriba). Además abajo se us\'o una fuente distinta. Hay que uniformizar eso.}
\begin{proposition}\label{p:geodesics}
Some {\bf closed simple} geodesics of the HS-manifold are:
  
  \begin{enumerate}
  
  \item The union of 2 consecutive diagonals which are part of the same geodesic in $\mathbb{D}^2$. The length of such a geodesic is $2 \ln(5+2\sqrt{6})$. 
  \item %The union of 4 consecutive heights which are part of the same geodesic in $\mathbb{D}$. The length of such a geodesic is $8 \ln (1+\sqrt{2})$. %(see for example pink edges), are {\bf closed simple geodesics} of $M_G$.  
  The union of 2 consecutive heights which are part of the same geodesic in a type II  tessellation of $\, \mathbb{D}^2$. The length of a geodesic of this type is $4 \ln (1+\sqrt{2})$. 
  %The union of 4 or 2 consecutive heights which are part of the same geodesic in a type I or type II  tessellation of $\, \mathbb{D}^2$, respecively. The lengths of geodesics of this type are $8 \ln (1+\sqrt{2})$ or $4 \ln (1+\sqrt{2})$ depending on the case. 
  %\Daniel{Un detalle aqu\'{\i}. La geod\'esica formada por la l\'{\i}nea morada de la teselaci\'on tipo I no es simple. El punto medio de la arista entre 3 y 4 se repite (hay dos aristas entre esos dos v\'ertices, pero la geod\'esica atraviesa el punto medio de una de ellas dos veces). ?`Puedes corroborar esto?}

  \item The union of 4 consecutive edges which are part of the same geodesic in a type II tessellation of $\, \mathbb{D}^2$.   %(see for example, yellow and brown edges, respectively), 
  In the case of type I tessellations, some of them are the union of only 2 consecutive edges (two distinct edges between the same pair of vertices). The lengths of geodesics of this type are $2\ln(2-\sqrt{3})$ or $4\ln(2-\sqrt{3}), $ depending on the type of tessellation.
  \end{enumerate}
  \end{proposition}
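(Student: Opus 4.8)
The plan is to pass to the universal cover $\mathbb{H}^{3}$ and to work inside a single hyperbolic plane $\Pi$ containing a hexagon of $\{6,3,4\}$. As recalled before the statement, $\{6,3,4\}$ cuts $\Pi$ in a tessellation $\mathcal{P}$ by regular hexagons with four around every vertex, i.e.\ a copy of $\{6,4\}$, so every hexagon has all its interior angles equal to $\pi/2$. The three families of curves in the statement are supported on geodesic lines of such a plane: a diagonal lies on the line through two opposite vertices of a hexagon, a height on the perpendicular bisector of two opposite edges, and an edge on its supporting line. In the first two cases this line $L$ is a reflection axis of $\mathcal{P}$, so it meets every hexagon it crosses in a congruent copy of the basic segment; in the edge case the four edges at a vertex of $\{6,4\}$ form two antipodal pairs, so a straight string of edges is again an honest geodesic of $\Pi$. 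In all three cases $L$ is a genuine geodesic of $\mathbb{H}^{3}$, made of infinitely many congruent copies of the basic piece (diagonal, height, or edge) abutting at angle $\pi$.

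Next I would reduce the length computations to hyperbolic trigonometry in the right-angled regular hexagon. Its three diagonals and three heights cut it into twelve congruent right triangles with angles $\pi/2$ at an edge midpoint, $\pi/6$ at the centre, and $\pi/4$ at a vertex. Writing $\rho$, $r$ and $\ell$ for the circumradius, the inradius and the edge length of the hexagon, the standard right-triangle relations give $\cosh\rho=\cot(\pi/6)\cot(\pi/4)=\sqrt{3}$, $\cosh r=\sqrt{2}$ and $\cosh(\ell/2)=\sqrt{3/2}$. Hence a diagonal has length $2\rho=\ln(5+2\sqrt{6})$, a height has length $2r=2\ln(1+\sqrt{2})$ and an edge has length $\ell=\ln(2+\sqrt{3})$. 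Multiplying by the number of pieces making up each closed geodesic---two diagonals, two heights, four edges in the type~II case and two edges in the type~I case---gives precisely the stated lengths (with $\ln(2-\sqrt{3})=-\ln(2+\sqrt{3})$ for the edge geodesics).

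The heart of the proof is to show that these geodesic lines actually close up in the HS-manifold with exactly the asserted period and without self-intersections. Concretely, for each type one must produce $g\in\mathcal{G}_{HS}$ whose axis is $L$ and whose translation length equals the claimed total, and check that $\langle g\rangle$ is the whole $\mathcal{G}_{HS}$-stabiliser of $L$ while no other translate of $L$ meets $L$. I would track $L$ through the configuration $\mathcal{R}$ of $12$ hexagons and their cones and chase the wall identifications of Section~\ref{sec:description}: $L$ leaves $\mathcal{R}$ through a wall, the corresponding gluing isometry carries that exit point to an entry point on a congruent copy of $L$, and one verifies that after two diagonals (resp.\ two heights, resp.\ four or two edges) the point and direction have returned. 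The edge case is cleanest through the embedded graph: a straight line of edges projects to a straight closed walk in the complete graph on $\{1,\dots,6\}$ with the three double edges, which in a type~I (purple) plane runs along a double edge and closes after the two edges between that pair of vertices (e.g.\ the black and brown edges between $3$ and $4$, cf.\ Remark~\ref{r:Hexagons}), while in a type~II (green) plane all edges are single and the walk closes only after four of them. Simplicity then follows because each basic piece is embedded and, on the totally geodesic surface that is the image of $\Pi$ in the HS-manifold, the union of two or four consecutive pieces is an arc of the simple line $L$; ruling out crossings with translates of $L$ lying in other planes uses the symmetry group $[4,3]$ of Proposition~\ref{prop:subgroup43} and the short orbit lists for hexagons and edges. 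The main obstacle is exactly this bookkeeping: pinning down the gluing element $g$ for each family and confirming that the period is neither a proper divisor nor a proper multiple of the stated value and that the resulting closed geodesic is simple; once that is settled, everything else is routine hyperbolic trigonometry.
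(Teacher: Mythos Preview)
Your plan is sound and matches the paper's route: both work in a single hyperbolic plane carrying the $\{6,4\}$ tessellation, recognise that diagonals, heights and edge-strings lie on honest geodesic lines of $\mathbb{H}^3$, and compute the segment lengths from the $(\pi/2,\pi/6,\pi/4)$ right triangle exactly as you do.

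The one tactical difference concerns the step you correctly flag as the main obstacle, namely exhibiting the deck transformation $g\in\mathcal{G}_{HS}$ with axis $L$ and the asserted translation length. You propose to chase the wall identifications of $\mathcal{R}$ until the exit data return. The paper avoids this bookkeeping: it labels every vertex of the planar $\{6,4\}$ tessellation by its image among the six vertex-orbits of the HS-manifold (using the adjacency pattern of Figure~\ref{fig:adyacencias}), locates on $L$ two vertices $v,v'$ carrying the same label at the claimed distance, and invokes freeness of the $\mathcal{G}_{HS}$-action on $\mathbb{H}^3$ to get a \emph{unique} $f\in\mathcal{G}_{HS}$ with $f(v)=v'$. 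The image under $f$ of one further labelled point on $L$ (again read off from the labels, together with the fact that $f$ has no fixed points) then forces $f(L)=L$, so $f$ is the desired loxodromic. This label-tracking is exactly the graph-theoretic idea you already use for the edge case, applied uniformly to all three families, and it dispenses with any explicit description of the gluing isometries.
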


%NO APARECE
\begin{figure}[h]
\begin{center}
    \includegraphics[width=11cm]{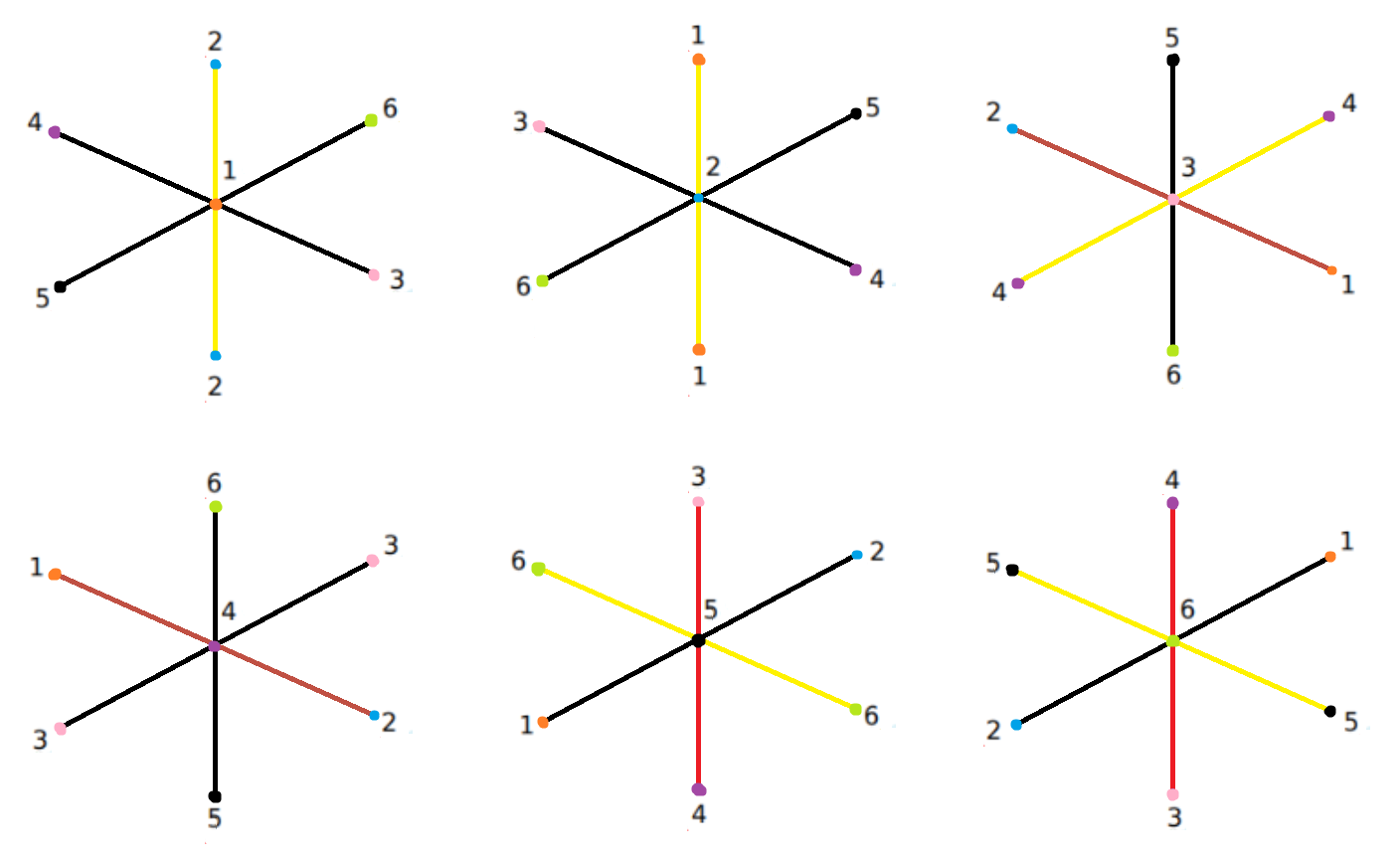}
\end{center}
  \caption{There are 6 vertex orbits, and each of them has the adjacencies shown in this figure. That follows from the combinatorics of Figure \ref{fig:identif}. Furthermore, any three aligned vertices correspond to a geodesic segment in $\mathbb{H}^3$ with vertices labeled with the three numbers appearing on that line, and therefore, any two lines (two pairs of opposite vertices intersecting at another vertex) belong to a hyperbolic plane in $\mathbb{H}^3$. }
  \label{fig:adyacencias} 
\end{figure}

\begin{proof}
As we verified before, there are 6 vertex orbits and the degree of each vertex is 6. Moreover, from Figure \ref{fig:identif} we can determine the labels of all neighboring vertices of each of the vertices (see Figure \ref{fig:adyacencias}), %as indicated in Figure \ref{fig:identif}
and therefore, we can label the vertices of the tessellations in Figure \ref{fig:geodesics}. 

We will focus on the first item for tessellations of type II. Since all diagonals of a hexagon in a tessellation of type II are equivalent under the symmetry group, we only consider one of them. Let $\gamma'$ be the geodesic segment formed by the two green diagonals (see Type II tessellation of Figure \ref{fig:geodesics}), that is the geodesic segment with endpoints $v_{6_1}:=\frac{\sqrt{3}-1}{2}i$ and $v_{6_2}:=\frac{2-6\sqrt{3}}{13}i$ when considered in $\mathbb{D}^2$ (both labeled with the number 6). Let $\gamma$ be the complete geodesic in $\mathbb{D}^2$ that contains $\gamma'$, that is the geodesic with endpoints $i$ and $-i$ in the absolute plane. Let $v_{5_1}:=\frac{1-\sqrt{3}}{2}i$ be the vertex labeled with 5 which is the midpoint between $v_{6_1}$ and $v_{6_2}$.

% {\color{red} Esto se ve bien, pero para que quede totalmente claro habr\'{\i}a que decir cu\'ales son las coordenadas de los v\'ertices de las 2 diagonales, hay que decir cu\'al estamos llamando 5 y cu\'al estamos llamando 6. } 

%Esto no está del todo bien, pero ya no sé si lo comenté yo o tú: Una media diagonal tiene longitud $(\sqrt{3}-1)/2$ y una media altura tiene longitud $\sqrt{2}-1$, así que las coordenadas del vértice 5 son $-(\sqrt{3}-1)i/2$ y las coordenadas del vértice etiquetado con 6 (el de arriba) son $(\sqrt{3}-1)i/2$

In the quotient $\mathbb{H}^3/\mathcal{G}_{HS}$ (see Section \ref{sec:description}), we want to prove that the image of $\gamma$ is a closed simple geodesic in the HS-manifold. Indeed %consideremos los vértices etiquetados con 1, 2, 3 y 4 adyacentes a $v_5$ y los nombraremos a partir de ahora, $v_1, \, v_2, \, v_3$ and $v_4$, respectively. {\color{blue} AQUÍ VOY} 
%Then, $v_5$ is adjacent (with a single geodesic segment in each case) to pairs of vertices $v_1$ and $v_2$; $v_3$ and $v_4$; and $v_{6_1}$ and $v_{6_2}$. A dichos segmentos geodésicos los nombraremos $\gamma_{12}, \gamma_{3,4}$ y $\gamma_{66}$.
since $ v_{6_1}$ and $v_{6_2}$ belong to the same orbit, there is a unique isometry $f\in \mathcal{G}_{HS}$ such that $f(v_{6_1})=v_{6_2}$ %{\color{red} Hay que pedirle un poco m\'as a la $f$. Supongo que bastar\'a pedirle que no preserve la regi\'on fundamental que contiene a $\gamma'$}.{\color{blue}Esto no lo entiendo, y no sé si lo que estás pensando se soluciona con los cambios que le hice.}
Then, $f(v_{5_1})$ is a vertex at distance $2\ln (1+2/\sqrt{3})$ from $v_{6_2}$ and with label 5. This means that $f(v_{5_1})= v_{5_1}$ or $f(v_{5_1})=v_{5_2}:= \frac{39\sqrt{3}-7}{74}i $ (this is the only neighbor of $v_{6_2}$ with label $5$, other than $v_{5_1}$). Since $f$ cannot have fixed points in $\mathcal{D}^2$ %\Yes{justificar; puedo usar los demás vértices alrededor de 5}, 
we conclude that $f(v_{5_1})=v_{5_2}$.%\newcommand{\hp}[1]{{\todo{\textcolor{violet}{hp: #1}}}}

It follows that $f(\gamma)$ is $\gamma$, and the restriction of $f$ to the plane in consideration must be a loxodromic isometry with fixed points $i$ and $-i$, whose axis of symmetry is $\gamma$.

In the remaining cases we proceed in a completely analogous way. It is only necessary to update the labels of the vertices or midpoints of edges.

%Let $\gamma$ be the complete geodesic which the geodesic segments of each case belong. The proposition follows from the fact that in each case, there is a loxodromic isometry which sends one of its ends into its other end. The isometry which sends one of its ends into the other is the loxodromic isometry with fix points $a$ and $b$, where $a$ and $b$ are the points on the boundary of $\mathbb{D}^2$ which are the ends of the complete geodesic to which both yellow edges belong.     

Finally, in order to know the lengths of the aforementioned geodesics, it is enough to compute the lengths of a diagonal, a height, and an edge, which are $\,\ln(1+2/\sqrt{3})\,$  $\,2 \ln (1+\sqrt{2}),$ and $\ln(2-\sqrt{3})$ respectively. 
\end{proof}

There are also {\bf non-simple closed} geodesics that are easy to recognize thanks to combinatorics. Examples of these are any union of 4 consecutive heights which are part of the same geodesic in a type I  tessellation of $\, \mathbb{D}^2$. They are closed geodesics with one self-intersection. This is because the ends of any union of two consecutive heights that belong to the same geodesic are in the same equivalence class, and to verify that we need 4 heights to obtain the total length of the geodesic, we justify in a similar way to the procedure followed in the demonstration of the previous proposition. Consider as an example the pink geodesic of the type I tessellation in Figure \ref{fig:geodesics}; its endpoints and its midpoint are in the same equivalence class (all three are midpoints of the same edge between a vertex labeled with 3 and a vertex labeled with 4). The length of a geodesic of this type is $8 \ln (1+\sqrt{2})$.

On the other hand, as we mentioned before, there are 5 different orbits in $\partial(\mathbb{H}^3)$ by the action of the group $\mathcal{G}_{HS}$ which correspond to the 5 cusp of the HS-manifold. We call {\it cusp point} any element in any of these orbits. Then {\bf unicuspid} and {\bf bicuspid} geodesics are easy to define; a bicuspid geodesic is one that is a projection of a geodesic whose ends are cusp points, and a unicuspid geodesic is one that is a projection of a geodesic whose ends are a cuspid point and a non-cusp point.
In our case, the cusp points are the elements in the orbits of the point at infinity or some of the orthogonal projections of the centers of the hexagons (see Figure \ref{fig:etiquetas}). 
Therefore, the vertical geodesics in $\mathbb{H}^3$ that pass through the center of each hexagon are examples of bicuspid geodesics.
 
%Ejemplo: (R_A R_A R_D R_a \gamma_1 es continuación del pedazo de geodésica vertical \gamma_1, donde \gamma_1 es la arista donde se intersectan las caras C y D ¿esto lo entiende Pellicer de alguna forma combinatoria?

%{\color{red} Ser\'{\i}a bueno decir expl\'{\i}citamente que las geod\'esicas verticales bicuspidales son las que terminan en puntos del plano al infinito que son im\'agenes del $\infty$ bajo el grupo $\mathcal{G}_{HS}$. Despu\'es se puede dejar el enunciado que pusiste sobre las que pasan por centros de hex\'agonos.}

%\textcolor{blue}{No son precisamente esas, pero ya especifico cuáles sí son :)}

\section{Conclusions}

Besides its relationship with the HSG, the manifold presented in this work has a rather high group of isometries with respect to its volume. These were two of the original motivations to describe this object.
This first introducton to the HS-manifold describes only few of its properties. Nevertheless, they show that this manifold is interesting from the perspectives of geometry of hyperbolic manifolds, topology of hyperbolic manifolds, group theory related to hyperbolic Coxeter groups, and combinatorics of CW-complexes. We believe that a deeper study will uncover deeper connections among these areas, and perhaps include some others.

\section{Acknowledgments}
This paper was supported by CONACYT ``Fondo Sectorial de Investigaci\'on para la Educaci\'on'' under grant A1-S-10839.

%\section{Conclusiones, intereses, preguntas...}

\bibliographystyle{amsplain}
\bibliography{main}

\end{document}